\newcommand{\aumlaut}[0]{\"a}
\newcommand{\zespolone}[0]{\mathbb{C}}
\newcommand{\rzeczywiste}[0]{\mathbb{R}}
\newcommand{\dom}{\text{dom}}
\newcommand{\email}{\href{mailto:szymon.myga@im.uj.edu.pl}{\nolinkurl{szymon.myga@im.uj.edu.pl}}}
\newcommand{\Monge}{Monge-Amp\`{e}re }
\newtheorem{Thm}{Theorem}[section]
\newtheorem{Prop}[Thm]{Proposition}
\newtheorem{Lem}[Thm]{Lemma}
\newtheorem{Cor}[Thm]{Corollary}
\newtheorem*{ThmA}{Theorem}
\newtheorem*{LemA}{Lemma}
\theoremstyle{definition}
\newtheorem*{Def}{Definition}
\newtheorem*{Rem}{Remark}
\newtheorem*{Ex}{Example}
\title{Optimal transport on completely integrable toric manifolds}
\author{Szymon Myga}
\date{}
\begin{document}

\maketitle



\begin{abstract}
We show that existence and uniqueness of solutions to transported \Monge problem on complex compact toric manifold follows easily from the real theory of optimal transportation. 
\end{abstract}

\section*{Introduction}

Let $(X,\omega)$ be a compact K\aumlaut hler manifold of real dimension $2n$, i.e. $X$ is a complex manifold and one can find a hermitian metric on it whose fundemental form $\omega$ is closed, thus making $(X,\omega)$ into a symplectic manifold. We assume that $X$ is toric - there is a real torus $T^k$ acting on it by automorphisms of $\omega$. Such an action also generates a Lie algebra homomorphism from the Lie algebra of the torus $\mathfrak{t} \simeq \rzeczywiste^k$ into the Lie algebra of the vector fields of $X$. This action can be extended to the holomorphic action of complexified torus $T_c^k \simeq (\zespolone^*)^k$. We also assume that the action is completely integrable and effective. That is, we want the torus to be of greatest possible dimension ($k=n$) and we want the trivial automorphism to only come from the identity element. Lastly, we want the action to be Hamiltonian, so we assume that there is a moment map: an action invariant function $m:X \rightarrow (\rzeczywiste^n)^*$, with $(\rzeczywiste^n)^*$ being the Lie algebra dual to $\mathfrak{t}$, such that for every element $t \in \rzeczywiste^n$
\[
 -d\langle m(p),t\rangle = \omega_p(t^{\#},\cdot) 
\]
with $t^{\#}$ being the vector field generated by $t$ and $\langle m(p),t\rangle$ being the value of the linear form $m(p)$ at $t$.

In this setting one can prove that the image of $X$ through $m$ is a compact convex polytope in $\rzeczywiste^n$ with non-empty interior. Moreover this image does not depend on the choice of particular $\omega$ in an invariant cohomology class. 

Suppose a probability measure with density $1/C < g(p) < C$ is given on a moment polytope $P$ for some toric K\aumlaut hler manifold $(X,\omega)$ with completely integrable torus action. Following the preprint~\cite{BermanWitt} one can define a notion of complex transported \Monge measure $MA_g$ on $X$ which corresponds to the \Monge measure that appears in the theory of optimal transportation of measures. Then the natural question to ask is whether the equation
\[
 MA_g(\phi) = \mu
\]
has a unique (up to an additive constant) solution for any invariant measure $\mu$ that does not put any mass on polar sets, i.e. the sets that are $-\infty$ loci of plurisubharmonic functions. This is a technical assumption that comes up when one tries to define a \Monge operator for singualr functions. A partial answer to the above question is provided in~\cite{BermanWitt} where the authors prove the existence of solutions and uniqueness for a subclass of measures.

In the setting sketched above we prove the following theorem:

\begin{ThmA}
For any invariant probability measure $\mu$ that does not put mass on polar sets there is an invariant $\phi \in PSH(X,\omega)$ such that
\[
 g(m_\phi)MA(\phi) = \mu,
\]
where $m_\phi$ is a moment map for the torus action induced by $\phi$.
\end{ThmA}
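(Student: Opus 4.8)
The plan is to translate the equation, via the standard dictionary between toric K\"ahler geometry and convex analysis, into the Monge--Amp\`ere equation of an optimal transport problem whose \emph{source} measure is absolutely continuous, and then to quote the Brenier--McCann theorem.

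\emph{Step~1 (toric dictionary).} Let $U\cong(\zespolone^*)^n\subset X$ be the open orbit, with logarithmic coordinates $x\in\rzeczywiste^n$, so $U\cong\rzeczywiste^n\times T^n$. Fix an invariant potential of $\omega$ on $U$ and let $u_0$ be the corresponding convex function on $\rzeczywiste^n$; it is the Legendre transform of the symplectic (Guillemin) potential $\psi_0$ of $\omega$, a convex function on $P$ extending continuously---hence boundedly---to $\bar P$. Invariant $\phi\in PSH(X,\omega)$ correspond bijectively to convex functions $u=u_0+\phi$ on $\rzeczywiste^n$ with $u\le u_0+O(1)$; the moment map of $\omega+dd^c\phi$ is $m_\phi=\nabla u$ on $U$, valued in $P$, with ``inverse'' $\nabla u^*:P\to\rzeczywiste^n$ given by the Legendre transform $u^*$ of $u$. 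Under this dictionary the transported Monge--Amp\`ere measure $MA_g(\phi)$ of~\cite{BermanWitt}, restricted to $U$ and averaged over $T^n$, becomes $g(\nabla u)\,MA_{\rzeczywiste}(u)$, where $MA_{\rzeczywiste}(u)$ is the real Monge--Amp\`ere (Alexandrov) measure of $u$. From $MA_{\rzeczywiste}(u)(E)=\mathcal{L}^n(\partial u(E))$, the layer-cake formula, and the identification of the subgradient image $\partial u(E)$ with $\{y\in P:\nabla u^*(y)\in E\}$ up to Lebesgue-null sets, one gets for every Borel $E\subseteq\rzeczywiste^n$
\[
 g(\nabla u)\,MA_{\rzeczywiste}(u)(E)=\int_{\partial u(E)}g\,d\mathcal{L}^n=(\nabla u^*)_*\big(g\,\mathcal{L}^n|_P\big)(E),
\]
so that $MA_g(\phi)$ is, in these terms, the push-forward of the probability measure $\rho:=g\,\mathcal{L}^n|_P$ by the inverse moment map $\nabla u^*$. (That $\rho$ is a probability measure matches $\int_X MA_g(\phi)=1$, a Duistermaat--Heckman-type normalisation.)

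\emph{Step~2 (reduction).} Since $\mu$ charges no polar set and $X\setminus U$ is a finite union of pluripolar toric divisors, $\mu$ is carried by $U$ and, being invariant, descends to a probability measure $\nu$ on $\rzeczywiste^n$. By Step~1 the equation $MA_g(\phi)=\mu$ for invariant $\phi$ is equivalent to finding a convex $u$ on $\rzeczywiste^n$ with $u\le u_0+O(1)$ and $(\nabla u^*)_*\rho=\nu$, i.e.\ to finding a convex function $\psi:=u^*$ on $P$ whose gradient transports the absolutely continuous, compactly supported probability measure $\rho$ onto the arbitrary probability measure $\nu$.

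\emph{Step~3 (optimal transport).} By the Brenier--McCann theorem on monotone measure-preserving maps---which requires only that the source be absolutely continuous, with \emph{no} condition whatsoever on the target---there is a convex function $\psi$ on $\rzeczywiste^n$ (set $\psi=+\infty$ off $P$) with $(\nabla\psi)_*\rho=\nu$. Put $u:=\psi^*$. A convex function finite on the interior of the bounded set $P$ is bounded below there (it has an affine minorant), so $u$ is finite and convex on $\rzeczywiste^n$; moreover $u^*=\psi$ on $\mathrm{int}\,P$ (where $\psi$, being finite convex, is continuous), hence $\nabla u^*=\nabla\psi$ $\mathcal{L}^n$-a.e.\ on $P$, hence $\rho$-a.e., giving $(\nabla u^*)_*\rho=\nu$. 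Comparing the lower bound of $\psi$ on $P$ with the upper bound of $\psi_0$ on $\bar P$ yields $\psi\ge\psi_0-C'$, whence $u=\psi^*\le\psi_0^*+C'=u_0+C'$; therefore $\phi:=u-u_0$ is invariant, bounded above, satisfies $\omega+dd^c\phi\ge0$ on $U$, and by Step~1 extends to an element of $PSH(X,\omega)$. By construction $MA_g(\phi)$ and $\mu$ coincide on $U$; since both are concentrated on $U$ and both have total mass $\int_P g=1$, we conclude $MA_g(\phi)=\mu$. (Uniqueness up to an additive constant, asserted in the abstract, follows the same way from the $\rho$-a.e.\ uniqueness of the transport map.)

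\emph{Main obstacle.} The optimal-transport input is entirely classical; the real work---and the sense in which the statement ``follows easily''---is Step~1: setting up the dictionary for possibly \emph{singular} invariant potentials and identifying the pluripotential-theoretic measure $MA_g(\phi)$ of~\cite{BermanWitt} with $(\nabla u^*)_*\rho$. This is mildly delicate where $u$ is not strictly convex, so that $m_\phi=\nabla u$ is set-valued along creases and $MA_g(\phi)$ acquires a singular part on the corresponding real hypersurfaces of $X$; one must also check that $MA_g(\phi)$ charges no pluripolar set, so that the equality on $U$ is the full statement.
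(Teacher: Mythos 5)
Your overall route is the same as the paper's: reduce to the real Monge--Amp\`ere/optimal-transport problem on the polytope via the toric dictionary and invoke McCann's theorem with source $\rho=g\,\mathcal{L}^n|_P$ (Steps 2 and 3 are essentially the paper's argument, including the placement of the absolutely continuous measure on the source side and the uniqueness via a.e.\ uniqueness of the transport map). The gap is exactly the point you yourself flag as the ``main obstacle'' and then do not resolve: the identification, for \emph{singular} potentials, of the pluripotential-theoretic measure $MA_g(\phi)$ of Berman--Witt with the push-forward $(\nabla u^*)_*\rho$. Your Step~1 asserts this via ``the layer-cake formula'' and the set identity $\partial u(E)\simeq\{y:\nabla u^*(y)\in E\}$, but that only shows that the \emph{Alexandrov-type} weighted measure $E\mapsto\int_{\partial u(E)}g\,d\mathcal{L}^n$ equals $(\nabla u^*)_*\rho$; the expression $g(\nabla u)\,MA_{\rzeczywiste}(u)$ is not even a priori defined on the singular part of $MA_{\rzeczywiste}(u)$, where $\nabla u$ does not exist, and reconciling the complex definition of $MA_g$ with this real one is the actual content of the theorem. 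As the introduction of the paper says, this convergence-matching step is ``the only gap to fill'' from McCann's result --- so leaving it as an acknowledged obstacle leaves the proof incomplete.

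The paper closes this gap by approximation: take a decreasing sequence $F_n\in\mathcal{P}_+$ of smooth strictly convex functions converging to $F_\phi$ (Lemma~\ref{Lem:aproksymacja}); on the complex side, Berman--Witt's continuity theorem (Theorem~\ref{Thm:zbieznosc-zesp}) gives $MA_g^{\zespolone}(F_n-F_0)\to MA_g^{\zespolone}(F_\phi)$ weakly; on the real side, for smooth strictly convex $F_n$ one has $MA_g^{\rzeczywiste}(F_n)=(\nabla F_n^*)_*\rho$ by an honest change of variables, and the paper's main technical lemma shows that monotone convergence of the uniformly Lipschitz $F_n$ forces $F_n^*\to F^*$ locally uniformly and $\nabla F_n^*\to\nabla F^*$ almost everywhere (via Attouch's theorem on graphical convergence of subgradients, Theorem~\ref{Thm:Attouch}, combined with a uniform local bound and equicontinuity for $\nabla F_n^*$, Theorem~\ref{Thm:rowno-grafo}); dominated convergence then yields $(\nabla F_n^*)_*\rho\to(\nabla F^*)_*\rho=\mu$, and the two limits identify $MA_g^{\zespolone}(\phi)$ with $\mu$. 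You would need to supply this (or an equivalent) argument for your Step~1 to stand.
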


As already suggested in~\cite{BermanWitt} the proof follows from the result of McCann~\cite{McCann}, although not in a straightforward way. The only gap to fill from there is to ensure that appropriate notions of convergence for real and complex solution coincide. Towards this end we prove the following lemma:

\begin{LemA}
 If a unifiormly Lipschitz sequence of convex functions $F_n$ converges in a monotone way to a convex function $F$, then their Legendre transforms $F^*_n$ converge to $F^*$ in $W^{1,\infty}_{loc}$.
\end{LemA}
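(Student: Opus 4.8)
The plan is to reduce the lemma to two standard facts: that the Legendre transform is an order-reversing involution on proper lower-semicontinuous convex functions, and that a locally uniformly convergent sequence of convex functions has gradients converging locally uniformly at every differentiability point of the limit. Let $L$ be a common Lipschitz constant for all $F_n$; then $F=\lim_n F_n$ is also convex and $L$-Lipschitz. I would treat the case $F_n\downarrow F$ in detail, the increasing case being entirely analogous via lower-semicontinuous hulls and biconjugation. The $L$-Lipschitz bound is what keeps everything in a fixed region: for a globally $L$-Lipschitz convex function $h$ and $|y|>L$ one gets $h^*(y)=+\infty$ (test $h^*$ along $x=ty/|y|$, $t\to\infty$, using $h(x)\le h(0)+Lt$), so $\dom F_n^*\subseteq\overline{B(0,L)}$ and $\dom F^*\subseteq\overline{B(0,L)}$ for all $n$. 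Write $\Omega:=\operatorname{int}(\dom F^*)$, the open set on which the asserted convergence is to be read.

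\emph{Step 1: convergence of the conjugates as functions.} Order reversal applied to $F_n\ge F_{n+1}\ge F$ gives $F_n^*\le F_{n+1}^*\le F^*$, and since in the decreasing case $F^*(y)=\sup_x\sup_n\bigl(\langle x,y\rangle-F_n(x)\bigr)=\sup_n F_n^*(y)$ by interchanging the two suprema, we get $F_n^*\to F^*$ pointwise. On $\Omega$ the functions $F_n^*$ are squeezed between $F_1^*$ and $F^*$, both finite there, so they are continuous on $\Omega$, increasing in $n$, and converge pointwise to the continuous limit $F^*$; Dini's theorem then upgrades this to locally uniform convergence on $\Omega$. Moreover a convex function bounded in absolute value by $M$ on a compact neighbourhood of $K$ inside $\Omega$ is Lipschitz on $K$ with a constant depending only on $M$ and the geometry, so $\sup_n\operatorname{Lip}(F_n^*|_K)<\infty$ for every compact $K\subset\Omega$.

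\emph{Step 2: convergence of the gradients.} Here I would invoke the elementary fact that if $g_n\to g$ locally uniformly with all $g_n,g$ convex on an open set, then passing to the limit in the subgradient inequalities $g_n(z)\ge g_n(y_n)+\langle v_n,z-y_n\rangle$ shows that any limit point of $v_n\in\partial g_n(y_n)$ with $y_n\to y$ lies in $\partial g(y)$; in particular, at any point $y$ where $g$ is differentiable one has $\nabla g_n\to\nabla g(y)$ along differentiability points approaching $y$. Applying this to $g_n=F_n^*$, $g=F^*$ on $\Omega$, and using that a convex function is differentiable Lebesgue-a.e., we obtain $\nabla F_n^*\to\nabla F^*$ a.e.\ on $\Omega$. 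Combined with the uniform local Lipschitz bound from Step 1, dominated convergence promotes this to convergence in $L^p_{loc}(\Omega)$ for every finite $p$ and to weak-$*$ convergence in $L^\infty_{loc}(\Omega)$, which together with the uniform convergence of the functions themselves is the claimed $W^{1,\infty}_{loc}$ convergence.

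The main obstacle is precisely this last step and its dependence on the regularity of $F^*$: near a corner of $F^*$ a conjugate $F_n^*$ may approximate that corner by a corner sitting at a slightly displaced location, so that $\nabla F_n^*$ differs from $\nabla F^*$ by a fixed amount on a set of small but positive measure. Consequently the gradient convergence is genuinely locally uniform only on the set where $F^*$ is $C^1$ (where the fact of Step 2 gives full locally-uniform convergence of $\nabla F_n^*$), and on all of $\Omega$ one reads it in the a.e.\ sense, equivalently as $L^p_{loc}$ convergence for $p<\infty$ and weak-$*$ convergence in $L^\infty_{loc}$. This is nonetheless exactly the mode of convergence needed in the proof of the Theorem when matching the real and complex notions of solution: uniform convergence of the conjugates together with a.e.\ convergence of their gradients under a uniform local bound.
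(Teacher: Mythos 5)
Your proof is correct and reaches the same substantive conclusion as the paper, but by a more self-contained route. Where you get locally uniform convergence of $F_n^*$ on $\operatorname{int}(\operatorname{dom}F^*)$ from monotonicity plus Dini's theorem (after interchanging suprema to see $F_n^*\uparrow F^*$ pointwise), the paper instead evaluates $F^*(p)-F_n^*(p)$ at a maximizer $x^*\in\partial F^*(p)$ and uses compactness of $\partial F^*(K)$; both work, and Dini is arguably cleaner. For the gradients, the paper invokes Attouch's theorem on graphical convergence of subdifferentials together with the Rockafellar--Wets equicontinuity criterion, the equicontinuity being supplied by a hands-on local bound on $|\nabla F_n^*|$; your argument replaces all of this by the elementary observation that limit points of $v_n\in\partial F_n^*(y_n)$, $y_n\to y$, lie in $\partial F^*(y)$, which combined with the uniform local Lipschitz bound and single-valuedness of $\partial F^*$ at differentiability points gives $\nabla F_n^*\to\nabla F^*$ a.e. This is exactly the content the paper extracts from the variational-analysis machinery (indeed your ``elementary fact'' is one inclusion of graphical convergence), so nothing is lost, and the argument becomes independent of the cited theorems.

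Your caveat about the mode of convergence is well taken and worth making explicit: norm convergence of the gradients in $L^\infty_{loc}$ genuinely fails. For instance with $F_n(x)=\max(|x|-1,\,(x+1)/n)$ decreasing to $F(x)=\max(|x|-1,0)$, all $1$-Lipschitz, one computes $\nabla F_n^*=-1$ on $(0,1/n)$ while $\nabla F^*=1$ there, so $\|\nabla F_n^*-\nabla F^*\|_{L^\infty(K)}\geq 2$ for any compact neighbourhood $K$ of $0$. What is actually proved, here and in the paper (whose proof explicitly only establishes a.e.\ convergence of $\nabla F_n^*$), is locally uniform convergence of the conjugates together with a.e.\ convergence of their gradients under a uniform local bound --- equivalently $W^{1,p}_{loc}$ convergence for $p<\infty$ and weak-$*$ convergence of the gradients in $L^\infty_{loc}$. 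That is precisely what the dominated-convergence step in the application requires, so the lemma should be read (or restated) in that weaker sense.
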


As already mentioned, in the real setting this is a well studied equation that appears in the theory of optimal transportation. In the complex case, it comes up as an equation for K\aumlaut hler-Ricci solitons on Fano varieties, although in that case the action might not be completely integrable.

\paragraph{Acknowledgement} The author would like to thank Sławomir Dinew for his guidance. The author was supported by Polish National Science Centre grant 2018/29/N/ST1/02817.

\section{Background material}

\subsection{Convex functions}




 Here we want to recall a few facts about convex functions. For a convex function $u:\rzeczywiste^n \rightarrow \rzeczywiste \cup \{+\infty\}$ we define its domain as the convex set $\{x\;:\;u(x) < \infty\}$ and denote it as $\dom(u)$. For convenience we exclude the function $u \equiv +\infty$ from the set of convex functions.
 
For any convex function $u$ on $\rzeczywiste^n$ its Legendre transform is defined by 
\[
 u^*(p):= \sup_{x\in\rzeczywiste^n}\{\langle x,p\rangle - u(x)\}.
\]
It is a crucial notion in convex analysis. It is not hard to show that the Legendre transform $u^*$ is a convex lower semicontinuous function.

The multivalued subgradient of $u$ is a set-valued map defined on $\text{int}(\dom(u)$) that attaches to a point the set of slopes of supporting planes at that point, namely
\[
 y\in\partial u(x) \Leftrightarrow \; \forall z \in \rzeczywiste^n \;\;\; u(x) + \langle y, z-x \rangle \leq u(z).
\]
Since $u$ is convex $\partial u$ is always non-empty on $\text{int}(\dom(u))$. It is single-vauled iff $u$ is differentiable at $x$ and at this point it is equal to $\nabla u(x)$.

The notion of a subgradient is closely related to the notion of a Legendre trasform through the following equivalences
  \[
   x\cdot p = u(x) + u^*(p) \Leftrightarrow p \in \partial u(x) \Leftrightarrow x \in \partial u^*(p).
  \]
From this, one can see that in the case of a smooth strictly convex function the gradient of the function and of its Legendre transform are each other's bijective inverses.

The most important fact concerning the differentiability of convex functions is the following one:
\begin{Thm}[Rademacher's Theorem]\label{Lem:Rademacher}
 Any convex function is differentiable on a subset of full measure of its domain.
\end{Thm}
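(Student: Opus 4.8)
The plan is to reduce the statement to the one-dimensional case via Fubini's theorem and then upgrade the resulting almost-everywhere existence of partial derivatives to full differentiability by exploiting convexity. Since the topological boundary of the convex set $\dom(u)$ has Lebesgue measure zero, it suffices to prove differentiability almost everywhere on the open convex set $U := \text{int}(\dom(u))$, on which $u$ is finite and, by a standard property of convex functions, locally Lipschitz.

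First I would record the one-variable fact: a convex function of one real variable has a right-hand derivative at every interior point of its domain, this right derivative is nondecreasing, hence continuous outside an at most countable set, and at each of its continuity points the left and right derivatives coincide, so the function is differentiable there. Applying this to the slices $t \mapsto u(x + t e_i)$ for each coordinate direction $e_i$ and invoking Fubini's theorem, the set $E_i \subseteq U$ on which $\partial u/\partial x_i$ fails to exist is Lebesgue-null. Therefore $A := U \setminus \bigcup_{i=1}^n E_i$ has full measure, and at every point of $A$ all partial derivatives of $u$ exist.

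Next I would show that $u$ is in fact Fréchet differentiable at each $x \in A$. Pick any $p \in \partial u(x)$, which is non-empty by convexity. Restricting the defining subgradient inequality to the line $x + \rzeczywiste e_i$ shows that $p_i$ is a subgradient at $0$ of the one-dimensional convex function $t \mapsto u(x + t e_i)$; since $x \in A$, that function is differentiable at $0$, so $p_i = \partial u/\partial x_i(x)$. Hence $\partial u(x)$ is the single point $\nabla u(x)$. Finally I would invoke (or prove) the lemma that a convex function with a singleton subgradient at an interior point of its domain is differentiable there: setting $g(v) := u(x+v) - u(x) - \langle \nabla u(x), v\rangle \ge 0$, the function $g$ is convex, vanishes at $0$ together with all its directional derivatives, and a short compactness argument using the local Lipschitz bound forces $g(v) = o(|v|)$ as $v \to 0$.

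The main obstacle, and the only step that uses more than bookkeeping, is this last passage from existence of all partials to honest differentiability: it genuinely needs convexity — through the local Lipschitz property and the fact that the directional derivative of a convex function is the support function of its subgradient — since for an arbitrary function the almost-everywhere existence of partial derivatives says nothing about differentiability. Everything else is the one-dimensional theory of monotone functions together with Fubini's theorem and the measure-zero bound on the boundary of a convex set.
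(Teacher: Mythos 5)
Your argument is correct and is the standard textbook proof of this classical fact (cf.\ Rockafellar, \emph{Convex Analysis}, Theorem 25.5): reduce to the one-dimensional monotonicity of one-sided derivatives via Fubini, observe that existence of all partials forces the subgradient to be a singleton, and then upgrade a singleton subgradient to Fr\'echet differentiability using the local Lipschitz bound (Dini-type uniformity of the difference quotients on the unit sphere). The paper states the theorem without proof, so there is nothing to compare against; the only point you leave implicit is the Borel measurability of the exceptional sets $E_i$, which is needed to apply Fubini and follows from the continuity of the difference quotients in $x$.
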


The set of differentiability of $u$ will be denoted by $\dom(\nabla u)$.

Finally, we list the properties of convex subgradients that will be of use to us. 

\begin{Prop}[Closedness of subgradients, {\cite[Theorem 24.4]{Rockafellar}}]\label{Prop:subgradient-domkniety}
 For any convex function $u$ the graph of $\partial u$ is a closed subset of $\rzeczywiste^n \times \rzeczywiste^n$.
\end{Prop}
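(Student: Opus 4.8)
Since $\rzeczywiste^n\times\rzeczywiste^n$ is a metric space it suffices to check sequential closedness: I would take sequences $x_k\to x$ and $y_k\to y$ with $y_k\in\partial u(x_k)$ and aim to conclude $y\in\partial u(x)$. Unwinding the definition of the subgradient, this is exactly a matter of passing to the limit in the inequalities
\[
 u(x_k)+\langle y_k,\,z-x_k\rangle\;\le\;u(z),\qquad z\in\rzeczywiste^n .
\]
For every fixed $z$ the inner product converges, $\langle y_k,z-x_k\rangle\to\langle y,z-x\rangle$, so the whole argument reduces to comparing $u(x)$ with $\liminf_k u(x_k)$; lower semicontinuity of $u$ gives $\liminf_k u(x_k)\ge u(x)$, and then $u(x)+\langle y,z-x\rangle\le u(z)$ for all $z$, i.e. $y\in\partial u(x)$.

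I would actually present this via the Fenchel--Young relation recalled above, which packages the limiting argument cleanly. Directly from the definition of $u^*$ one has $u(x')+u^*(p)\ge\langle x',p\rangle$ for all $x',p$, with equality precisely when $p\in\partial u(x')$; hence the graph of $\partial u$ is the sublevel set $\{\,(x',p):\;u(x')+u^*(p)-\langle x',p\rangle\le 0\,\}$. As $u^*$ is lower semicontinuous (noted above) and $u$ is lower semicontinuous as well (automatic in our applications, where $u$ is Lipschitz, hence continuous on its domain), the function $(x',p)\mapsto u(x')+u^*(p)-\langle x',p\rangle$ is lower semicontinuous, so its $\{\le 0\}$ set is closed and we are done. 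This formulation also makes transparent that the only hypothesis truly used is joint lower semicontinuity.

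The one delicate point, and the one I expect to be the main obstacle, is the behaviour at the relative boundary of $\dom(u)$: a subgradient inequality survives passage to the limit only when tested against the lower semicontinuous hull of $u$, so for a general convex function that is not lower semicontinuous the graph of $\partial u$ can in fact fail to be closed there. The statement is therefore to be read, exactly as in \cite[Theorem 24.4]{Rockafellar}, for closed (lower semicontinuous) convex functions --- which is harmless here, since in every use we make of it $u$ is continuous on a compact polytope, so the direct limiting argument above applies without any reduction.
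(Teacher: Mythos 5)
Your proof is correct. The paper does not prove this proposition at all --- it simply cites \cite[Theorem 24.4]{Rockafellar} --- and your second packaging via the Fenchel--Young relation (the graph of $\partial u$ as the closed set $\{(x,p):u(x)+u^*(p)-\langle x,p\rangle\le 0\}$, using lower semicontinuity of $u$ and $u^*$) is essentially verbatim Rockafellar's own argument, while your first, direct limiting argument is the elementary version of the same thing. Your remark that lower semicontinuity of $u$ is genuinely needed (and that the graph can fail to be closed at the relative boundary of $\dom(u)$ otherwise) is a correct refinement of the statement as phrased in the paper, and harmless for the paper's applications, where the relevant functions are either finite everywhere or arise as Legendre transforms and hence are closed.
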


\begin{Prop}[Boundedness of subgradients, {\cite[Corollary 24.5.1]{Rockafellar}}]
 Pick any $x \in \text{int}\,(\dom(\phi))$, then for any positive $\epsilon$ there is a positive $\delta$ such that
 \[
  \partial\phi(B(x,\delta)) \subseteq \partial\phi(x) + B(0,\epsilon),
 \]
 with $B(x,\delta)$ denoting the ball centered at $x$, with the radius $\delta$.
\end{Prop}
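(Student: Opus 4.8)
The plan is to deduce the statement from two ingredients: the \emph{local boundedness} of the subdifferential near an interior point of $\dom(\phi)$, and the closedness of the graph of $\partial\phi$ recorded in the preceding Proposition (\ref{Prop:subgradient-domkniety}, i.e. \cite[Theorem 24.4]{Rockafellar}).

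First I would establish local boundedness. Since $x\in\text{int}\,(\dom(\phi))$ one can pick $r>0$ with $\overline{B(x,2r)}\subseteq\text{int}\,(\dom(\phi))$, and on this compact ball $\phi$ is bounded: it is bounded above because a convex function on $\rzeczywiste^n$ is dominated by the maximum of its values at the vertices of any simplex containing the ball, and it is then bounded below via $\phi(x)\le\tfrac12\bigl(\phi(x+v)+\phi(x-v)\bigr)$. Put $M=\sup_{\overline{B(x,2r)}}\phi$ and $m=\inf_{\overline{B(x,2r)}}\phi$. For $z\in\overline{B(x,r)}$ and $y\in\partial\phi(z)$ with $y\neq 0$ (the case $y=0$ being trivial), the defining inequality $\phi(z)+\langle y,w-z\rangle\le\phi(w)$ tested at $w=z+r\,y/|y|\in\overline{B(x,2r)}$ yields $r|y|\le\phi(w)-\phi(z)\le M-m$. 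Hence $\partial\phi\bigl(B(x,r)\bigr)\subseteq B\bigl(0,(M-m)/r\bigr)$.

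Then I would argue by contradiction. If the conclusion failed for some $\epsilon>0$, then for each $k$ there would be $x_k\in B(x,1/k)$ and $y_k\in\partial\phi(x_k)$ with $\text{dist}(y_k,\partial\phi(x))\ge\epsilon$. For $k$ large $x_k\in\overline{B(x,r)}$, so by the local bound just obtained $(y_k)$ is bounded and, along a subsequence, $y_k\to y$. Since $x_k\to x$ and the graph of $\partial\phi$ is closed, $y\in\partial\phi(x)$; but $\text{dist}(y_k,\partial\phi(x))\ge\epsilon$ passes to the limit, giving $\text{dist}(y,\partial\phi(x))\ge\epsilon$, a contradiction. The only slightly delicate point in this scheme is the boundedness of $\phi$ itself on the interior of its domain; everything else is a soft compactness argument. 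Alternatively, and perhaps most economically for the present paper, one may simply invoke \cite[Corollary 24.5.1]{Rockafellar}, of which the stated Proposition is the one-point specialization.
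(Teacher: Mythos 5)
Your argument is correct. Note, though, that the paper offers no proof of this Proposition at all: it is quoted verbatim from Rockafellar (Corollary 24.5.1), and the surrounding text uses it as a black box. So your write-up supplies a self-contained derivation where the paper has only a citation. What you prove is the standard fact that $\partial\phi$ is locally bounded and outer semicontinuous at interior points of $\dom(\phi)$, and your two ingredients --- the bound $r\lvert y\rvert\le\phi(z+r\,y/\lvert y\rvert)-\phi(z)\le M-m$ obtained by testing the subgradient inequality in the direction of $y$, and the closed-graph-plus-compactness contradiction --- are exactly the ingredients Rockafellar's own proof rests on, so you have not taken a genuinely different route so much as unpacked the reference. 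One small point of order in the local-boundedness step: you should choose the simplex first (with vertices in $\dom(\phi)$ and with $x$ in its interior, which exists because $x\in\text{int}\,(\dom(\phi))$) and only then pick $r$ so that $\overline{B(x,2r)}$ sits inside it; as written, ``any simplex containing the ball'' need not have its vertices in $\dom(\phi)$. With that reordering the upper bound $M$ is finite, the lower bound $m$ follows from the midpoint inequality as you say, and the rest of the argument goes through without change.
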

This gives us the following corollary:
\begin{Cor}\label{Cor:ogr-na-zwartych}
 For any convex function $u$ and for any compact subset $K$ of $int\,(\dom(u))$ the set $\partial u(K)$ is bounded.
\end{Cor}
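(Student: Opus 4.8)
The plan is to read this off from the Boundedness of subgradients proposition by a standard compactness argument. For each $x\in K$ I would apply that proposition with $\epsilon=1$ (which is legitimate since $K\subseteq\mathrm{int}(\dom(u))$) to obtain a radius $\delta(x)>0$ with
\[
 \partial u\big(B(x,\delta(x))\big)\subseteq \partial u(x)+B(0,1).
\]
The balls $B(x,\delta(x))$, $x\in K$, form an open cover of $K$, so by compactness there are finitely many points $x_1,\dots,x_N\in K$ with $K\subseteq\bigcup_{i=1}^N B(x_i,\delta(x_i))$. Hence
\[
 \partial u(K)\subseteq\bigcup_{i=1}^N\partial u\big(B(x_i,\delta(x_i))\big)\subseteq\bigcup_{i=1}^N\big(\partial u(x_i)+B(0,1)\big),
\]
and since each $\partial u(x_i)$ is bounded, so is the finite union on the right.

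The only point needing a word beyond the cited proposition is that $\partial u(x_i)$ is itself bounded. This is classical: at an interior point of the domain a convex function is locally Lipschitz, so its subgradients there are locally bounded; equivalently $\partial u(x_i)$ is a nonempty compact convex set. If one prefers to avoid even this, one can argue directly: choose a compact set $K'\subseteq\mathrm{int}(\dom(u))$ containing $K$ in its interior, let $r>0$ be the distance from $K$ to $\rzeczywiste^n\setminus K'$ and $M=\sup_{K'}|u|<\infty$; then for $x\in K$ and $0\neq y\in\partial u(x)$, testing the defining inequality of the subgradient at $z=x+r\,y/|y|\in K'$ gives $r|y|\le u(z)-u(x)\le 2M$, whence $|y|\le 2M/r$ uniformly over $K$.

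In either form the argument is routine and there is no real obstacle: the content is entirely carried by the preceding proposition (or, in the alternative route, by local Lipschitz continuity of convex functions on the interior of their domain together with compactness of $K$).
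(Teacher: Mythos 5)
Your proposal is correct and follows essentially the same route as the paper: cover $K$ by the $\delta$-balls supplied by the boundedness-of-subgradients proposition, pass to a finite subcover, and then verify separately that $\partial u(x)$ is bounded at each individual interior point (your direct estimate $r|y|\le u(z)-u(x)\le 2M$ is just a quantitative version of the paper's remark that an unbounded subgradient would force $u$ to blow up near $x$). No gaps worth noting.
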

\begin{proof}
 Indeed, pick an $\epsilon$ and take appropriate $\delta_x$-balls at points $x$ in $K$. This gives a covering of $K$. The only thing left to show is that at $x \notin \dom(\nabla u)$ the subgradient $\partial u(x)$ is still bounded. But $x$ is in $\text{int}(\dom(u))$, if the subgradient would be unbounded then $u$ would get arbitrarily big arbitrarily close to $x$, which cannot be since $x$ lies a positive distance away from the boundary of $\dom(u)$.
\end{proof}

\begin{Lem}\label{Lem:rownosc-pw}
 Let $u,v$ be two convex functions defined on some convex set $C$ with non-empty interior. If $\{\nabla u = \nabla v\}$ is a subset of full measure of $C$ then $u\equiv v$ in $\text{int}(C)$ modulo additive constant.
\end{Lem}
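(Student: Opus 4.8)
The plan is to localize and mollify, reducing the statement to the elementary fact that a smooth function with vanishing gradient on a connected open set is constant.

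First I would recall that a convex function is locally Lipschitz on the interior of its domain; in particular $u$ and $v$ are continuous on $\text{int}(C)$ and belong to $W^{1,1}_{loc}(\text{int}(C))$, and (by Rademacher's Theorem~\ref{Lem:Rademacher}) their almost-everywhere defined gradients coincide with their distributional derivatives. The hypothesis thus says precisely that the distributional gradient of $u-v$ vanishes on $\text{int}(C)$.

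Next, fix a ball $B$ with $\overline{B}\subset\text{int}(C)$ and a standard mollifier $\rho_\varepsilon$; for $\varepsilon$ small the convolutions $u_\varepsilon:=u*\rho_\varepsilon$ and $v_\varepsilon:=v*\rho_\varepsilon$ are smooth on $B$. Writing $u_\varepsilon(x)=\int u(z)\rho_\varepsilon(x-z)\,dz$, differentiating under the integral and integrating by parts --- legitimate because $u$ is Lipschitz near the compact support of the integrand --- gives $\nabla u_\varepsilon=(\nabla u)*\rho_\varepsilon$, and likewise for $v$. Since $\nabla u=\nabla v$ a.e., this yields $\nabla u_\varepsilon=\nabla v_\varepsilon$ on $B$, so $u_\varepsilon-v_\varepsilon$ is constant on the connected set $B$, say $u_\varepsilon-v_\varepsilon\equiv c_\varepsilon$. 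As $\varepsilon\to 0$ the mollifications converge pointwise (in fact locally uniformly) to $u$ and $v$ by continuity, hence $c_\varepsilon$ converges and $u-v$ equals the limiting constant on $B$. Finally, $\text{int}(C)$ is convex, hence connected, so any two such balls are joined by a finite chain of overlapping ones; the constants must agree on the overlaps, and therefore $u-v$ is a single additive constant on all of $\text{int}(C)$.

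I expect the only genuinely delicate point to be the identity $\nabla u_\varepsilon=(\nabla u)*\rho_\varepsilon$: it relies on the fact that the a.e.\ classical gradient of a convex (Lipschitz) function is also its weak derivative, so that the integration by parts above is valid; everything else is routine. One could instead avoid mollification entirely by a Fubini argument: the null set on which $\nabla u\ne\nabla v$ (or either function fails to be differentiable) meets almost every line parallel to a fixed coordinate axis in a one-dimensional null set, along such a line the restrictions of $u$ and $v$ are absolutely continuous with a.e.\ equal derivatives, hence differ by a constant, and continuity of $u-v$ on $\text{int}(C)$ promotes ``constant on almost every line'' to ``globally constant''.
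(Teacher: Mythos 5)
Your proposal is correct and follows essentially the same route as the paper: mollify, observe that $\nabla(u\ast\rho_\varepsilon)=(\nabla u)\ast\rho_\varepsilon=(\nabla v)\ast\rho_\varepsilon=\nabla(v\ast\rho_\varepsilon)$ so the smooth mollifications differ by a constant, and pass to the locally uniform limit. You are merely more explicit than the paper about the justification of the commutation of mollification with the gradient and about the connectedness/chaining step, which is a welcome level of detail but not a different argument.
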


\begin{proof}
 If $\rho_{\epsilon}$ is the standard mollifier then it is easy to verify that $u\ast \rho_{\epsilon}$ and $v\ast \rho_{\epsilon}$ are convex in $C^\epsilon$ smooth and convegre locally uniformly to $u$ and $v$ respectively. Where the set $C^\epsilon$ is the domain of definition of mollified function, i.e. $\{x\in C \;|\; \text{dist}(x,\partial C) > \epsilon\}$.  By smoothness, the almost everywhere equality of their gradients implies their equality everywhere and thus $u\ast \rho_{\epsilon}$ and $v\ast \rho_{\epsilon}$ must converge to the same function up to a constant.
\end{proof}

\subsubsection{Convergence of convex functions}
The following facts about the convergence of convex functions will be of use. The most natural notion is the following.

\begin{Lem}
 If a sequence of convex functions $\{u_k\}$ converges locally uniformly to a function $u$, then $u$ is convex.  
\end{Lem}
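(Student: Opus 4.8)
The plan is to use the fact that convexity is encoded in a family of pointwise inequalities, each of which survives passage to a limit. Recall that a function $v$ on a convex set is convex exactly when, for every pair of points $x,y$ in its domain and every $t\in[0,1]$, one has $v(tx+(1-t)y)\le t\,v(x)+(1-t)\,v(y)$. So the entire argument reduces to checking this single inequality for $u$.

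First I would fix arbitrary $x,y$ and a parameter $t\in[0,1]$ and write down the convexity inequality valid for each member of the sequence, namely $u_k(tx+(1-t)y)\le t\,u_k(x)+(1-t)\,u_k(y)$. Since locally uniform convergence implies pointwise convergence, the three numerical sequences $u_k(tx+(1-t)y)$, $u_k(x)$ and $u_k(y)$ converge respectively to $u(tx+(1-t)y)$, $u(x)$ and $u(y)$. Letting $k\to\infty$ on both sides and using that limits preserve non-strict inequalities yields $u(tx+(1-t)y)\le t\,u(x)+(1-t)\,u(y)$, which is precisely the convexity of $u$.

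There is essentially no obstacle here; the only point worth noting is that $u$ is real-valued wherever the $u_k$ converge, being a pointwise limit of finite values, so the inequalities above never involve the indeterminate form $+\infty-\infty$, and the standing convention excluding $u\equiv+\infty$ causes no trouble. If one prefers, the same conclusion can be phrased via epigraphs: the epigraph of $u$ is the set of limits of points in the epigraphs of the $u_k$, hence convex; but the direct inequality argument above is the most economical.
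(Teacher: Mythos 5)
Your argument is correct: convexity is a family of pointwise non-strict inequalities, each of which is preserved under the pointwise (hence locally uniform) convergence $u_k\to u$, and your remark about finiteness of the limit disposes of the only possible degeneracy. The paper states this lemma without proof, and your argument is exactly the standard one that the author evidently has in mind, so there is nothing to add.
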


We would also like to say something about the convergence of subgradients.

\begin{Def}
 We say that the sequence of subgradients $\partial u_n$ converges graphically to subgradient $\partial u$ if their graphs converge as sets, i.e.
 \begin{align*}
  \text{graph}(\partial u) = \{(x,p) \; |\; \exists \; (x_n,p_n) \in \rzeczywiste^n\times \rzeczywiste^n \,: p_n \in \partial u_n(x_n) \\
  \text{and}\, (x_n,p_n) \rightarrow (x,p) \}.
 \end{align*} 
\end{Def}

The following theorem of Attouch is a fundamental result concerning the graphical convergence of subgradients.
\begin{Thm}[{\cite[Theorem 12.35]{RockafellarWets}}]\label{Thm:Attouch}
 For convex, lower semicontinuous functions $f_n$ and $f$ the following are equivalent:
 \vspace{-1em}
 \begin{enumerate}
  \item $f_n \rightarrow f$ locally uniformly,
  \item $\partial f_n \rightarrow \partial f$ graphically and for some choice of $p_n \in \partial f_n(x_n)$ and $p \in \partial f(x)$ such that $(x_n,p_n) \rightarrow (x,p)$ one has $f_n(x_n) \rightarrow f(x)$.
 \end{enumerate}
\end{Thm}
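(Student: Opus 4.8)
\emph{Strategy.} The plan is to prove the two implications directly from the subgradient inequality, avoiding the Moreau--Yosida machinery that one would use in the Hilbert space version. The implication $(1)\Rightarrow(2)$ is the softer one: the inclusion ``limit points of the graphs $\subseteq \text{graph}(\partial f)$'' comes from passing to the limit in the defining inequality, while the reverse inclusion is produced by a strictly convex perturbation of a tilted function. For $(2)\Rightarrow(1)$ I would reconstruct $f$ from the graphical limit, the single function value in the normalization serving only to pin down the additive constant that the subdifferential cannot detect. Throughout, ``locally uniformly'' is understood on compact subsets of $\text{int}(\dom(f))$, where the $f_n$ are eventually finite.

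\textbf{Proof of $(1)\Rightarrow(2)$.} For the outer inclusion, suppose $(x_n,p_n)\to(x,p)$ with $p_n\in\partial f_n(x_n)$. For every $z$ one has $f_n(z)\ge f_n(x_n)+\langle p_n,z-x_n\rangle$; for $z\in\text{int}(\dom(f))$ the left side tends to $f(z)$ and, since $x_n\to x$, local uniform convergence gives $f_n(x_n)\to f(x)$, so in the limit $f(z)\ge f(x)+\langle p,z-x\rangle$. This extends to all $z$ by lower semicontinuity, hence $p\in\partial f(x)$. For the reverse inclusion fix $p\in\partial f(x)$ with $x\in\text{int}(\dom(f))$ and, for a fixed $\epsilon>0$, set
\[
 g_n(z):=f_n(z)-\langle p,z-x\rangle+\tfrac{\epsilon}{2}|z-x|^2,\qquad g(z):=f(z)-\langle p,z-x\rangle+\tfrac{\epsilon}{2}|z-x|^2.
\]
Then $g(z)-g(x)\ge\tfrac{\epsilon}{2}|z-x|^2>0$ for $z\neq x$, so $g$ is strictly convex with unique minimizer $x$, and $g_n\to g$ locally uniformly. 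Minimizing over a small closed ball $\bar B(x,r)\subset\text{int}(\dom(f))$ yields minimizers $x_n\to x$; for large $n$ the point $x_n$ is interior, so $0\in\partial g_n(x_n)$, i.e. $p_n:=p-\epsilon(x_n-x)\in\partial f_n(x_n)$, and $(x_n,p_n)\to(x,p)$. The same construction gives the normalizing sequence, since $f_n(x_n)\to f(x)$ by local uniform convergence.

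\textbf{Proof of $(2)\Rightarrow(1)$.} The crucial step is a uniform local bound: for each compact $K\subset\text{int}(\dom(f))$ there exist $M,n_0$ with $\partial f_n(K)\subseteq\bar B(0,M)$ for $n\ge n_0$. If this fails, then along a subsequence there are $z_k\to z\in K$ and $q_k\in\partial f_{n_k}(z_k)$ with $|q_k|\to\infty$ and $q_k/|q_k|\to u$. Choose $w=z+\delta u\in\text{int}(\dom(f))$ and $r^*\in\partial f(w)$; graphical convergence provides $w_k\to w$ and $r_k\in\partial f_{n_k}(w_k)$ with $r_k\to r^*$. Monotonicity of the subdifferential (add the two defining inequalities) gives $\langle q_k-r_k,z_k-w_k\rangle\ge0$, yet $z_k-w_k\to-\delta u$ forces $\langle q_k,z_k-w_k\rangle=|q_k|\langle q_k/|q_k|,z_k-w_k\rangle\to-\infty$ while $\langle r_k,z_k-w_k\rangle$ stays bounded, a contradiction. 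With this bound the $f_n$ are equi-Lipschitz on compacts, and equi-bounded there thanks to the normalization; by Arzel\`a--Ascoli any subsequence has a locally uniform limit $\tilde f$, necessarily convex. Applying the already-proven $(1)\Rightarrow(2)$ to that subsequence gives $\partial f_{n_k}\to\partial\tilde f$ graphically, while the hypothesis gives $\partial f_{n_k}\to\partial f$; uniqueness of graphical limits yields $\text{graph}(\partial\tilde f)=\text{graph}(\partial f)$, hence $\nabla\tilde f=\nabla f$ almost everywhere, so $\tilde f=f$ up to a constant by Lemma~\ref{Lem:rownosc-pw}. The normalization fixes the constant to be zero, so every subsequential limit equals $f$ and the whole sequence converges locally uniformly.

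\textbf{Main obstacle.} The only genuinely delicate point is the uniform local boundedness of the subgradients: graphical convergence alone permits slopes to escape to infinity over a fixed compact set, and it is precisely monotonicity, combined with the inner approximation of a fixed subgradient of the limit, that excludes this. Everything else is either a direct limit in the subgradient inequality or a reuse of the converse implication together with the elementary fact that pointwise-equal gradients force equality of convex functions up to a constant.
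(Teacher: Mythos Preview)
The paper does not prove this theorem at all: it is quoted as Theorem~12.35 from Rockafellar--Wets \cite{RockafellarWets} and used as a black box, with only a remark that the epigraphical convergence in the original statement is equivalent to locally uniform convergence. There is therefore no ``paper's own proof'' to compare your attempt against.

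As for the proposal itself, the strategy is reasonable and close in spirit to standard direct proofs of Attouch's theorem in the finite-dimensional convex case. A few points would need tightening before it is complete. In $(1)\Rightarrow(2)$ you only establish the inner inclusion for $x\in\text{int}(\dom(f))$, but $\partial f$ can be nonempty at boundary points of $\dom(f)$, so the full graphical convergence statement is not yet proved; likewise, in the outer inclusion you use $f_n(x_n)\to f(x)$, which your notion of local uniform convergence only guarantees when $x$ lies in the interior. In $(2)\Rightarrow(1)$ the equi-boundedness step ``thanks to the normalization'' needs the normalizing points $x_n$ to eventually lie inside a compact subset of $\text{int}(\dom(f))$ on which the uniform Lipschitz bound holds; you should say why this is the case (or reduce to it). None of these are fatal, but they are genuine gaps rather than mere omissions of routine detail.
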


\begin{Rem}
 The first part of the orginal theorem is expressed in terms of ``epigraphical'' convergence, but is equivalent to locally uniform convergence (see~\cite[Theorem 7.17]{RockafellarWets}).
\end{Rem}

Finally, we would like to describe the relationship between the graphical convergence and pointwise convergence, for this we need the following definition:

\begin{Def}
 The sequence of set-valued maps $S_n$ is equicontinuous at point $x$ with respect to subset $X$ if for each positive $\epsilon$ there is a neighbourhood $V$ of $x$ such that 
for almost every $n$
\[
 S_n(y) \subset S_n(x) + B(0,\epsilon) \;\;\text{for all}\; y \in V \cap X.
\]
The sequence is equicontinuous with respect to $X$ if it is equicontinuous at each point of $X$.
\end{Def}

\begin{Rem}
 In~\cite{RockafellarWets} the above  notion is called the asymptotic equi-outer-semi\-con\-tinu\-ity. Since we don't need other notions of 
equicontinuity we will just call that one the equicontinuity.
\end{Rem}

Now the deisired relationship is the following:

\begin{Thm}[{\cite[Theorem 5.40]{RockafellarWets}}]\label{Thm:rowno-grafo}
 For the sequnece of set-valued maps $S_n$, the map $S$ and a set $X$ any pair of the the following conditions implies the third:
 \begin{enumerate}
  \item $S_n$ is equicontinuous with respect to $X$,
  \item $S_n$ converges graphically to $S$ relative to $X$,
  \item $S_n$ converges pointwise to $S$ relative to $X$.
 \end{enumerate}
\end{Thm}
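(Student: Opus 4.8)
The plan is to prove the three implications among conditions~(1), (2), (3) one at a time, having first rephrased everything in terms of inner and outer Painlev\'e--Kuratowski limits of sets: a point belongs to the outer limit $\limsup_n A_n$ exactly when it is the limit of a subsequence of points drawn from the $A_n$, and to the inner limit $\liminf_n A_n$ exactly when its distance to $A_n$ tends to $0$. In this language condition~(2) says $\mathrm{gph}_X S_n \to \mathrm{gph}_X S$ as subsets of the product space, with $\mathrm{gph}_X T := \{(x,u) : x\in X,\ u\in T(x)\}$, and condition~(3) says $S_n(x)\to S(x)$ for each $x\in X$. Two of the six inclusions one has to check are then for free. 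Graphical convergence alone gives $\limsup_n S_n(x)\subseteq S(x)$ for every $x\in X$: if $u_{n_k}\in S_{n_k}(x)$ and $u_{n_k}\to u$ then $(x,u_{n_k})\in\mathrm{gph}_X S_{n_k}$ converges to $(x,u)$, so $(x,u)\in\mathrm{gph}_X S$. Pointwise convergence alone gives $\mathrm{gph}_X S\subseteq\liminf_n\mathrm{gph}_X S_n$: if $u\in S(x)$, pick $u_n\in S_n(x)$ with $u_n\to u$ and keep the first coordinate at $x$. So the substance of the theorem is the opposite inclusions, and I expect the one engine behind all of them to be the following reformulation of equicontinuity at $x$: whenever $x_n\to x$ with $x_n\in X$ and $u_n\in S_n(x_n)$, one has $\mathrm{dist}(u_n,S_n(x))\le\epsilon$ for all large $n$, i.e. a point of the graph lying over a nearby fibre can be pushed to the fibre over $x$ with error $\epsilon$.

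Granting this, (1)+(2)$\Rightarrow$(3) would run as follows. Fix $x\in X$ and $u\in S(x)$. Graphical convergence provides $(x_n,u_n)\in\mathrm{gph}_X S_n$ with $(x_n,u_n)\to(x,u)$; equicontinuity gives $\mathrm{dist}(u_n,S_n(x))\le\epsilon$ eventually, and $u_n\to u$ then forces $\limsup_n\mathrm{dist}(u,S_n(x))\le\epsilon$, hence $u\in\liminf_n S_n(x)$ after $\epsilon\downarrow 0$; together with the free inclusion $\limsup_n S_n(x)\subseteq S(x)$ this is $S_n(x)\to S(x)$. For (1)+(3)$\Rightarrow$(2) only $\limsup_n\mathrm{gph}_X S_n\subseteq\mathrm{gph}_X S$ remains: given $(x_{n_k},u_{n_k})\in\mathrm{gph}_X S_{n_k}$ converging to $(x,u)$, equicontinuity yields $v_{n_k}\in S_{n_k}(x)$ with $|u_{n_k}-v_{n_k}|\le\epsilon+o(1)$; the $v_{n_k}$ are bounded, so along a subsequence $v_{n_k}\to v$, and pointwise convergence ($\limsup_n S_n(x)=S(x)$) gives $v\in S(x)$ with $|u-v|\le\epsilon$, whence $u\in S(x)$ as $\epsilon\downarrow 0$. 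A recurring technical nuisance is making sure the limiting first coordinate stays in $X$; I would dispose of it by taking $X$ closed, or by systematically working with the appropriate relative closures as in the reference.

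The remaining implication (2)+(3)$\Rightarrow$(1) I would argue by contradiction, and this is the step I expect to be the real obstacle. If equicontinuity fails at some $x\in X$ there are $\epsilon>0$, a subsequence, points $y_{n_k}\to x$ in $X$, and $u_{n_k}\in S_{n_k}(y_{n_k})$ with $\mathrm{dist}(u_{n_k},S_{n_k}(x))>\epsilon$. If $(u_{n_k})$ is bounded the argument closes quickly: extract $u_{n_k}\to u$, get $u\in S(x)$ from graphical convergence, then $v_n\in S_n(x)$ with $v_n\to u$ from pointwise convergence, and $|u_{n_k}-v_{n_k}|<\epsilon$ for large $k$ contradicts the choice of $u_{n_k}$. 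The delicate case is $|u_{n_k}|\to\infty$: one has to prevent the fibres $S_n(y)$ from developing directions at infinity that are invisible to the (bounded) fibres $S_n(x)$, which calls for an a priori local boundedness of the maps --- equivalently, for control of the horizon cones of the converging graphs. In the situation we will actually use, with $S_n=\partial\phi_n$ the subgradients of convex functions and $X$ a compact subset of the interior of a common domain, this local boundedness is exactly Corollary~\ref{Cor:ogr-na-zwartych}, so the troublesome case never occurs; in the general statement it is the one place where something beyond bookkeeping with inner and outer limits is needed.
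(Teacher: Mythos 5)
The paper offers no proof of this statement---it is quoted from Rockafellar--Wets---so what follows assesses your argument on its own terms. Your treatment of $(1)+(2)\Rightarrow(3)$ and $(1)+(3)\Rightarrow(2)$ is correct and is essentially the standard argument: the reformulation of equicontinuity at $x$ as ``$x_n\to x$ in $X$ and $u_n\in S_n(x_n)$ force $\mathrm{dist}(u_n,S_n(x))\le\epsilon$ eventually'' is the right engine, and the two ``free'' inclusions are indeed free. (The closing step of $(1)+(3)\Rightarrow(2)$ needs $S(x)$ closed; that is automatic, since a pointwise Painlev\'e--Kuratowski limit is in particular an outer limit of sets and hence closed.) The issue of the limiting first coordinate leaving $X$ is, as you say, handled by working with the relative notions as in the reference.

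The one place you flag as a possible obstacle---the unbounded case of $(2)+(3)\Rightarrow(1)$---is a genuine issue, but with the definition of equicontinuity as stated in this paper rather than with your argument. With the untruncated definition given above ($S_n(y)\subset S_n(x)+B(0,\epsilon)$ for all $y\in V\cap X$) the implication $(2)+(3)\Rightarrow(1)$ is actually false: take $S_n(y)=\{0,n\}$ when $y=1/n$ and $S_n(y)=\{0\}$ otherwise, on $\mathbb{R}$. Then $S_n\to S\equiv\{0\}$ both pointwise and graphically (the stray graph points $(1/n,n)$ escape to infinity and contribute nothing to the outer limit of the graphs), yet $S_n(1/n)\not\subset S_n(0)+B(0,\epsilon)$ for every $n>\epsilon$, so equicontinuity fails at $0$. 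The resolution is that the notion actually used in Rockafellar--Wets (asymptotic equi-outer-semicontinuity) truncates the fibres: it requires $S_n(y)\cap B(0,\rho)\subset S_n(x)+B(0,\epsilon)$ for each fixed $\rho$. With that definition, the witnesses $u_{n_k}$ of a failure of equicontinuity are bounded by construction, your ``bounded case'' argument is the entire proof of $(2)+(3)\Rightarrow(1)$, and the troublesome case never arises. In the application made in this paper the two definitions coincide anyway, because the maps are subgradients of convex functions restricted to a compact subset of the interior of the domain and hence locally bounded uniformly in $n$ by Corollary~\ref{Cor:ogr-na-zwartych}, exactly as you observe. So your proof is complete for the theorem as it is actually needed; the residual gap is an artifact of the paper's rendering of the definition, and to prove the literal statement above one should either restore the truncation by $B(0,\rho)$ or add a uniform local boundedness hypothesis.
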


\subsubsection{The class of globally Lipschitz convex functions}

\begin{Def}
 By support function of a bounded convex set $P$ contatining zero we mean the function
 \[
  \phi_P(x) := \sup_{p\in P}\langle x,p\rangle.
 \]
 If the set $P$ is bounded then $\phi_P$ is finite everywhere.
\end{Def}

\begin{Def}
 We will denote by $\mathcal{P}$ the space of convex functions dominated by $\phi_P$, i.e. the set $\{u - \text{convex}\;|\; \exists\,C: \; u \leq \phi_P + C \}$. This is the set of convex functions whose Legendre transform is $+\infty$ outisde $P$. 
 The subset of $\mathcal{P}$ consisting of functions that also dominate $\phi_P$ will be denoted by $\mathcal{P}_{+}$, in other words $\mathcal{P}_+ = \{u \in \mathcal{P}\; |\; \exists\, C:\; u-C \geq \phi_P\}$. Both sets can be equipped with the topology of pointwise convergence, which is equivalent to the topology of locally uniform convergence, by the virtue of uniform Lipschitz constant for all $\mathcal{P}$.
\end{Def}

The set $\mathcal{P}_+$ is dense in $\mathcal{P}$. Moreover the approximating sequence can be chosen as nice as possible.

\begin{Lem}[{\cite[Lemma 2.2]{BermanBernd}}]\label{Lem:aproksymacja}
 Every $\phi \in \mathcal{P}$ can be approximated by decreasing seqence of smooth strictly convex functions from $\mathcal{P}_+$.
\end{Lem}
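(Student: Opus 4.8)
The plan is to exhibit one explicit one-parameter family and read the sequence off it, with no diagonal argument. Two preliminaries are needed. First, every $\phi\in\mathcal P$ is real-valued and globally $R$-Lipschitz on $\rzeczywiste^n$ with $R:=\sup_{p\in P}|p|$ --- this is the uniform Lipschitz bound already used to describe the topology of $\mathcal P$ (a subgradient of norm $>R$ at an interior point would make $\phi$ outgrow $\phi_P+C$ along a ray), so the convergence to be produced is locally uniform. Second, I fix once and for all a smooth strictly convex member $\theta\in\mathcal P_+$: writing $v_1,\dots,v_N$ for the vertices of the polytope $P$, put
\[
 \theta(x):=\log\!\Big(\sum_{i=1}^{N}e^{\langle x,v_i\rangle}\Big)-\log N .
\]
Then $\theta\in C^\infty$; its gradient is a convex combination of the $v_i$, hence lies in $P$, so $\theta$ is $R$-Lipschitz; its Hessian is the covariance matrix of a fully supported probability measure on $\{v_1,\dots,v_N\}$, positive definite because $P$ is full-dimensional, so $\theta$ is strictly convex; and $\max_i\langle x,v_i\rangle\le\log\sum_i e^{\langle x,v_i\rangle}\le\log N+\max_i\langle x,v_i\rangle$ gives $\phi_P-\log N\le\theta\le\phi_P$.

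Let $\rho_\epsilon$ be the \emph{Gaussian} kernel (density of the centered Gaussian law with covariance $\epsilon^2 I_n$), let $c_n$ be the dimensional constant with $\|u\ast\rho_\epsilon-u\|_\infty\le c_n\epsilon\,\mathrm{Lip}(u)$, and let $M:=C+\log N+c_nR$, where $\phi\le\phi_P+C$ with $C\ge0$. Since a $\phi$ lying far below $\phi_P$ cannot be pushed into $\mathcal P_+$ by a small perturbation, I build the lifting into the family itself: for $0<\epsilon\le1$ set
\[
 \Phi_\epsilon:=(1-\epsilon)\,a_\epsilon+\epsilon\,\theta+2M\epsilon,\qquad a_\epsilon:=\max\!\big(\phi,\ \phi_P-\tfrac{1}{\epsilon}\big)\ast\rho_\epsilon ,
\]
and write $\Psi_\epsilon:=\max(\phi,\phi_P-1/\epsilon)$, a convex $R$-Lipschitz function with $\phi_P-1/\epsilon\le\Psi_\epsilon\le\phi_P+C$, so that $a_\epsilon=\Psi_\epsilon\ast\rho_\epsilon$. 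The ``static'' properties are routine checks that I would carry out in turn: $\Phi_\epsilon\in C^\infty$ (Gaussian convolution plus smooth $\theta$); $\Phi_\epsilon$ is strictly convex ($\epsilon\theta$ is, the rest convex); $\Phi_\epsilon\le\phi_P+\mathrm{const}$ because $\theta\le\phi_P$ and the factor $(1-\epsilon)$ is precisely what reabsorbs the linear growth of $\epsilon\theta$, so $\Phi_\epsilon\in\mathcal P$; $\Phi_\epsilon\ge\phi_P-1/\epsilon-\log N$ because $a_\epsilon\ge\Psi_\epsilon\ge\phi_P-1/\epsilon$ and $\theta\ge\phi_P-\log N$, so $\Phi_\epsilon\in\mathcal P_+$ (the admissible constant blowing up as $\epsilon\to0$ is harmless, membership in $\mathcal P_+$ being per-function); $\Phi_\epsilon\ge\phi$ because $a_\epsilon\ge\Psi_\epsilon\ge\phi$, $\theta-\phi\ge-\log N-C\ge-M$, and the additive $2M\epsilon$ dominates that error; and $\Phi_\epsilon\to\phi$ locally uniformly, since on any ball $\Psi_\epsilon$ eventually equals $\phi$, whence $a_\epsilon$ is $\phi\ast\rho_\epsilon$ up to a super-fast vanishing Gaussian-tail correction, $\|\phi\ast\rho_\epsilon-\phi\|_\infty\le c_nR\epsilon$, and $\epsilon(\theta-\phi)+2M\epsilon\to0$ on compact sets.

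The step I expect to be the real obstacle is that the sequence must be \emph{decreasing}; this is exactly what dictates the Gaussian choice of kernel and the shape of $\Phi_\epsilon$. The key claim is that $a_\epsilon$ is nondecreasing in $\epsilon$, even though both $\Psi_\epsilon$ and the mollification radius vary with $\epsilon$: for $\epsilon_1<\epsilon_2$ one has $\Psi_{\epsilon_1}\le\Psi_{\epsilon_2}$ pointwise, hence $\Psi_{\epsilon_1}\ast\rho_{\epsilon_1}\le\Psi_{\epsilon_2}\ast\rho_{\epsilon_1}$, and the semigroup identity $\rho_{\epsilon_2}=\rho_{\epsilon_1}\ast\rho_{\sqrt{\epsilon_2^2-\epsilon_1^2}}$ together with convexity of $\Psi_{\epsilon_2}\ast\rho_{\epsilon_1}$ gives $\Psi_{\epsilon_2}\ast\rho_{\epsilon_1}\le\Psi_{\epsilon_2}\ast\rho_{\epsilon_2}$. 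Setting $g_\epsilon:=a_\epsilon-\theta$, so that $\Phi_\epsilon=(1-\epsilon)g_\epsilon+\theta+2M\epsilon$, one has for $\epsilon_1<\epsilon_2\le1$ the identity
\[
 \Phi_{\epsilon_1}-\Phi_{\epsilon_2}=(\epsilon_2-\epsilon_1)\,g_{\epsilon_1}+(1-\epsilon_2)\,(g_{\epsilon_1}-g_{\epsilon_2})-2M(\epsilon_2-\epsilon_1);
\]
the middle term is $\le0$ since $g_{\epsilon_1}-g_{\epsilon_2}=a_{\epsilon_1}-a_{\epsilon_2}\le0$, and $g_{\epsilon_1}\le M$ (from $a_{\epsilon_1}\le\phi_P+C+c_nR$ and $\theta\ge\phi_P-\log N$), so the right-hand side is $\le-M(\epsilon_2-\epsilon_1)<0$. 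Hence $\epsilon\mapsto\Phi_\epsilon$ is increasing, and $\{\Phi_{1/j}\}_{j\ge1}$ is the required decreasing sequence of smooth strictly convex functions in $\mathcal P_+$ converging to $\phi$.
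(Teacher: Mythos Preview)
The paper does not prove this lemma; it simply cites \cite[Lemma 2.2]{BermanBernd} and moves on. So there is no in-paper argument to compare against, and your proposal stands on its own.

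Your construction is correct. The log-sum-exp reference potential $\theta$ has the advertised properties (the Hessian is the covariance of a fully supported discrete law on the vertices, hence positive definite because $P$ has non-empty interior), and the family $\Phi_\epsilon=(1-\epsilon)a_\epsilon+\epsilon\theta+2M\epsilon$ with $a_\epsilon=\max(\phi,\phi_P-1/\epsilon)\ast\rho_\epsilon$ satisfies all the ``static'' requirements by the estimates you list; in particular the chain $a_\epsilon\ge\Psi_\epsilon\ge\phi$ uses that convolving a convex function with a mean-zero probability measure increases it (Jensen), which you might make explicit. The monotonicity step, which is indeed the crux, is handled neatly: the Gaussian semigroup identity $\rho_{\epsilon_2}=\rho_{\epsilon_1}\ast\rho_{\sqrt{\epsilon_2^2-\epsilon_1^2}}$ combined with the same Jensen observation forces $\epsilon\mapsto a_\epsilon$ to be nondecreasing, and your algebraic rearrangement with $g_\epsilon=a_\epsilon-\theta\le M$ then gives $\Phi_{\epsilon_1}-\Phi_{\epsilon_2}\le -M(\epsilon_2-\epsilon_1)<0$.

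One cosmetic simplification: in the convergence paragraph you do not need the ``Gaussian-tail correction'' detour. Since $\Psi_\epsilon$ is globally $R$-Lipschitz you already have $\|a_\epsilon-\Psi_\epsilon\|_\infty\le c_nR\epsilon$ everywhere, and on the given ball $\Psi_\epsilon=\phi$ for small $\epsilon$, so $|a_\epsilon-\phi|\le c_nR\epsilon$ there directly.
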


\subsection{Optimal transport and Monge-Amp\`{e}re equation}

We say that the function $T:\rzeczywiste^n \rightarrow \rzeczywiste^n$ transports probability measure $\mu$ to probability measure $\nu$ if for any Borel set $A$ 
the following equality holds
\[
 \nu[A] = \mu[T^{-1}(A)].
\]
Alternatively we say that $T$ pushes $\mu$ forward to $\nu$ and denote the push-froward measure by $T_{\#}\mu$.

In general there will be a lot of such maps, so it is natural to put some optimality constraints on them. The best understood contraint and in some cases the natural one is minimizing the quadratic cost, i.e. the transport map should  minimize the following functional
\[
 \int_{\rzeczywiste^d} |x - T(x)|^2 \,d\mu.
\]
In general there might not be a solution and if it exists it might not be unique, some regularity assumptions for the measures must be added. For example, one can assume that the measures have finite second moments and $\mu$ is absolutely continuous. In that case the solution exists and has a form of $T=\nabla \phi$ for some convex function $\phi$. For thorough discussion of this problem, the reader might consult~\cite{Villani}.

Supposing that a solution exists, by the trasport condition we get
\[
 \int\chi_A \,d\nu = \int_A\, d\nu = \int_{(\nabla\phi)^{-1}(A)}\,d\mu = \int \chi_A\circ \nabla\phi\, d\mu
\]
That can easily be generalized to get that for any $f \in C_b(\rzeczywiste^n)$
\begin{equation}\label{eq:rownanie}
 \int f \,d\nu = \int f\circ\nabla\phi\, d\mu. 
\end{equation}
Here $C_b(\rzeczywiste^n)$ denotes the set of continuous and bounded functions on $\rzeczywiste^n$.

Suppose now that $d\nu = g(x)dx$ for some density $g(x)$ and $\phi$ is a $C^2$ function. By change of variables formula we get that
\[
  \int f(\nabla\phi(x))\, d\mu = \int f(\nabla \phi(x))g(\nabla \phi(x))\det D^2\phi\, dx
\]
and that provides one with a notion of solution to the transported \Monge equation
\[
 MA^{\rzeczywiste}_g(\phi) := g(\nabla \phi(x))\det D^2\phi = \mu
\]
as long as the optimal transport map exists.

As we mentioned, for any two probability measures the optimal transport solution might not exist. However, under a mild regularity assumption it is still possible to transport one to another through a subgradient of convex function, so that the condition~(\ref{eq:rownanie}) is still satisfied. This is the content of the following important theorem. 

\begin{Thm}[McCann~\cite{McCann}]
 Let $\mu, \nu$ be probability measures on $\rzeczywiste^n$ and suppose that $\mu$ vanishes on Borel subsets of $\rzeczywiste^n$ of Hausdorff diemnsion $n-1$. Then there exists a convex function $\psi$ on $\rzeczywiste^n$ whose subgradient $\partial\psi$ pushes $\mu$ forward to $\nu$. $\partial\psi$ is uniquely deterined $\mu$-almost everywhere.
\end{Thm}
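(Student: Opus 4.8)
The plan is to run the by-now-standard argument for Brenier--McCann type results: pass through the Kantorovich relaxation to obtain an optimal coupling whose support is cyclically monotone, reconstruct a convex potential via Rockafellar's theorem, and then use a sharp regularity property of convex functions -- that their singular sets are small -- to upgrade the coupling to the graph of a gradient; the moment-free generality is recovered by a routine approximation. First I would treat the case where $\mu$ and $\nu$ have \emph{compact support}. Then the set $\Pi(\mu,\nu)$ of couplings is nonempty and weakly compact, the quadratic cost $\gamma\mapsto\int|x-y|^2\,d\gamma$ is weakly continuous on it, so an optimal coupling $\gamma_0$ exists; the standard finite-swap argument shows $\mathrm{supp}(\gamma_0)$ is cyclically monotone, and Rockafellar's theorem yields a convex lower semicontinuous $\psi$ (possibly $+\infty$ somewhere) with $\mathrm{supp}(\gamma_0)\subseteq\mathrm{graph}(\partial\psi)$.

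The heart of the matter is to deduce $(\nabla\psi)_{\#}\mu=\nu$ from this. Since the graph of $\partial\psi$ is closed (Proposition~\ref{Prop:subgradient-domkniety}), $\gamma_0$ is concentrated on it, so its first marginal $\mu$ lives on $\dom(\partial\psi)$; the part of $\dom(\partial\psi)$ outside $\mathrm{int}(\dom\psi)$ lies in the relative boundary of a convex set, a set of Hausdorff dimension at most $n-1$, hence $\mu$-null, so $\mu$ is in fact concentrated on $\mathrm{int}(\dom\psi)$ (in particular $\dom\psi$ has nonempty interior). There $\psi$ is locally Lipschitz and, by the classical structure theorem, its non-differentiability set $\Sigma$ is contained in countably many Lipschitz hypersurfaces, so $\dim_{\mathcal H}\Sigma\le n-1$ and $\mu(\Sigma)=0$ by hypothesis. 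Hence $\partial\psi$ is single-valued, equal to $\nabla\psi$, $\mu$-a.e.; disintegrating $\gamma_0$ along its first marginal forces $\gamma_0=(\mathrm{id}\times\nabla\psi)_{\#}\mu$, and reading off the second marginal gives $(\nabla\psi)_{\#}\mu=\nu$, i.e.\ $\partial\psi$ pushes $\mu$ forward to $\nu$.

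For general $\mu,\nu$ I would approximate: choose compactly supported probability measures $\mu_k\rightharpoonup\mu$ and $\nu_k\rightharpoonup\nu$, obtain from the previous steps optimal couplings $\gamma_k$ with cyclically monotone support, and note that tightness of $\{\mu_k\}$ and $\{\nu_k\}$ makes $\{\gamma_k\}$ tight, so $\gamma_k\rightharpoonup\gamma\in\Pi(\mu,\nu)$ along a subsequence. Every point of $\mathrm{supp}(\gamma)$ is a limit of points of $\mathrm{supp}(\gamma_k)$, and cyclical monotonicity is a closed condition on finitely many points, so $\mathrm{supp}(\gamma)$ is cyclically monotone; applying Rockafellar's theorem and then the extraction of the preceding paragraph (this is the step that uses the hypothesis on $\mu$) produces the desired convex $\psi$. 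For uniqueness, in the compactly supported case two such gradients give two couplings that are both optimal (cyclical monotonicity of the support is equivalent to optimality for the quadratic cost here), so their midpoint is optimal, hence concentrated on a graph by the argument above, which forces the two maps to agree $\mu$-a.e.; the general case then follows as in~\cite{McCann}.

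The step I expect to be the main obstacle is the passage from ``$\gamma$ concentrated on a cyclically monotone set'' to ``$\gamma$ is the graph of a map'': it rests on the sharp fact that the singular set of a convex function on $\rzeczywiste^n$ has Hausdorff dimension at most $n-1$ (together with the analogous statement for the relative boundary of $\dom\psi$), which is precisely why the hypothesis is phrased in terms of Hausdorff dimension rather than mere absolute continuity, and it is also where one must be careful that $\psi$ need not be finite everywhere. The approximation is comparatively routine once the stability of cyclical monotonicity under weak limits is observed, and the bookkeeping for uniqueness with unbounded supports, while mildly delicate, follows standard lines.
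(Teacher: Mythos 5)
The paper does not prove this theorem at all --- it is quoted verbatim from McCann's paper \cite{McCann} as an imported black box --- so there is no internal proof to compare against; I can only assess your sketch on its own terms. In outline it is sound and is essentially the standard Brenier--McCann route: an optimal (or at least cyclically monotone) coupling, Rockafellar's theorem to produce a convex potential $\psi$ with the support of the coupling inside $\mathrm{graph}(\partial\psi)$, and the sharp fact that the non-differentiability set of a convex function (together with the relative boundary of $\mathrm{dom}\,\psi$) is covered by countably many Lipschitz hypersurfaces, hence is $\mu$-null under the stated hypothesis; this correctly identifies why the hypothesis is phrased via Hausdorff dimension $n-1$ rather than absolute continuity. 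Your device of approximating by compactly supported measures and passing cyclical monotonicity through the weak limit is a legitimate variant of McCann's own approximation (he uses discrete measures, for which a cyclically monotone coupling comes from the finite assignment problem, precisely to avoid imposing any moment conditions); both avoid assuming finiteness of the quadratic cost in the existence part.

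The one genuine gap is uniqueness for measures that are not compactly supported. Your midpoint argument relies on ``cyclically monotone support $\Leftrightarrow$ optimal coupling,'' which is only available when the quadratic cost is finite; for general $\mu,\nu$ there may be no finite-cost coupling at all, so optimality is meaningless and the argument collapses. Writing that ``the general case then follows as in \cite{McCann}'' is circular here, since that general uniqueness statement is part of what you are asked to prove, and it is in fact the delicate point of McCann's paper: his uniqueness proof is a direct geometric argument comparing two couplings concentrated on graphs of monotone maps, not a variational one. To close the gap you would either need to reproduce that argument (roughly: if $\nabla\psi_1\neq\nabla\psi_2$ on a set of positive $\mu$-measure, monotonicity of the two subgradients lets one build a set whose $\nu$-mass is counted inconsistently by the two transports) or restrict the theorem to finite second moments, which would weaken the statement.
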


Of course the assumption on the null sets of $\mu$ can not be abandoned. For example if $\mu = \delta_x$ and $\nu$ is not a point measure, then if $A$ is such a set that $0 < \nu[A] < 1$ one gets that for any convex function $\phi$, $\nu[A] \neq \mu[(\partial\phi)^{-1}(A)]$ since the latter must always be either 0 or 1.

\subsection{Torus action}

As in the intrduction we are interested in completely integrable K\aumlaut hler mani\-folds. In this setting the following results provide the correspondence between the K\aumlaut hler geometry and convex functions.

\begin{Prop}[\cite{Guillemin}]
 There is an open dense subset $X_0 \subset X$ where the action of $T_c^n$ is free, making $X_0$ diffeomorphic to $(\zespolone^*)^n$. Every invariant K\aumlaut hler form $\omega$ on $X$ has a K\aumlaut hler potential on $X_0$, i.e.
 \[
  \omega|_{X_0} = 2i\partial\bar{\partial}F
 \]
 for some $F$.
\end{Prop}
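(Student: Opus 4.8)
The plan is to reduce everything to an elementary computation on $(\zespolone^*)^n$ via the dense open orbit; the one genuinely geometric ingredient is that the Hamiltonian hypothesis forces the torus orbits to be Lagrangian.

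First I would dispose of the assertion about $X_0$. Since $T_c^n$ is abelian and acts holomorphically and effectively, any $t\in T_c^n$ fixing a point $p$ of the open dense orbit fixes the whole orbit, because $t\cdot(s\cdot p)=s\cdot(t\cdot p)=s\cdot p$; the orbit being dense, $t$ acts trivially on $X$, hence $t=e$ by effectiveness. So $T_c^n$ acts freely and transitively on its open dense orbit $X_0$, and the orbit map $t\mapsto t\cdot p$ is a biholomorphism $T_c^n\to X_0$; as $T_c^n\simeq(\zespolone^*)^n$, this is the stated description of $X_0$.

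For the potential, fix coordinates $z=(z_1,\dots,z_n)$ on $X_0\cong(\zespolone^*)^n$, put $w_j=\log z_j$ (so that $dw_j=dz_j/z_j$ is single-valued) and write $w_j=\xi_j+\urojona\,\eta_j$ with $\xi_j=\log|z_j|\in\rzeczywiste$ and $\eta_j=\arg z_j$; then $\xi$ ranges over all of $\rzeczywiste^n$ and $T^n$ acts by translation of $\eta$. Writing $\omega|_{X_0}=\urojona\sum_{j,k}h_{jk}\,dw_j\wedge d\bar w_k$ with $(h_{jk})$ Hermitian, invariance forces $(h_{jk})$ to depend on $\xi$ only. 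The crucial step is that the orbits $\{\xi=\mathrm{const}\}$ are Lagrangian: for two generators $t^{\#},s^{\#}$ of the action, $\iota_{t^{\#}}\omega=-d\langle m,t\rangle$ together with invariance of $m$ gives $\omega(t^{\#},s^{\#})=-d\langle m,t\rangle(s^{\#})=0$, so the orbit is isotropic, hence, having half the dimension, Lagrangian. Restricting $\omega$ to an orbit and using that $h$ is Hermitian, one finds the restriction to be a multiple of $\sum_{j<k}(\mathrm{Im}\,h_{jk})\,d\eta_j\wedge d\eta_k$, so $\mathrm{Im}\,h\equiv0$ and $(h_{jk})$ is real symmetric. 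Some such input is unavoidable: for a small antisymmetric constant matrix $(b_{jk})$ the form $\urojona\sum(\delta_{jk}+\urojona\,b_{jk})\,dw_j\wedge d\bar w_k$ is an invariant K\aumlaut hler form on $(\zespolone^*)^n$ with no global potential, its de Rham class in $H^2(X_0)\cong H^2(T^n)$ being nonzero; what excludes this on a compact toric $X$ is exactly the moment map, which makes the orbits Lagrangian and forces $b\equiv0$.

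It then remains to integrate. With $(h_{jk})$ real symmetric and a function of $\xi$ alone, the vanishing of the $(2,1)$-component of $d\omega$ reads $\partial_{\xi_l}h_{jk}=\partial_{\xi_j}h_{lk}$, so for each $k$ the closed $1$-form $\sum_j h_{jk}\,d\xi_j$ on $\rzeczywiste^n$ equals $dG_k$ for some $G_k$; the symmetry $h_{jk}=h_{kj}$ gives $\partial_{\xi_j}G_k=\partial_{\xi_k}G_j$, hence $\sum_k G_k\,d\xi_k=dF$ for some $F=F(\xi)$ and $h_{jk}=\partial_{\xi_j}\partial_{\xi_k}F$. Converting $\partial_\xi$-derivatives via $\partial_{w_j}=\tfrac{1}{2}(\partial_{\xi_j}-\urojona\,\partial_{\eta_j})$ then gives $\omega|_{X_0}=2\urojona\,\partial\bar\partial F$ after the harmless rescaling of $F$ fixed by one's normalization of $w_j$; regarding $F$ as an invariant function on $X_0$ (and, if one wishes it manifestly invariant, averaging it over $T^n$) finishes the proof, while positivity of $\omega$ shows moreover that $F$ is strictly convex in $\xi$ — the bridge to convex analysis used throughout. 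I expect the Lagrangian step to be the real obstacle: it is where the moment map is actually used, and it is what distinguishes the true statement from its false counterpart for general invariant closed $(1,1)$-forms.
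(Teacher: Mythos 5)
The paper offers no proof of this Proposition---it is quoted from Guillemin's paper and used as a black box---so your argument is necessarily a different route: a self-contained derivation. The substantive part is essentially correct and well organized. The reduction of the Hermitian matrix $h_{jk}(\xi)$ to a real symmetric one via the Lagrangian (hence isotropic) orbits, using $\omega(t^{\#},s^{\#})=-s^{\#}\langle m,t\rangle=0$ from the invariance of $m$, is exactly the right use of the Hamiltonian hypothesis, and your constant-coefficient counterexample with antisymmetric $b_{jk}$ correctly identifies why some such input is needed (the class of $\omega|_{X_0}$ in $H^2(T^n)$ must vanish). The two-step integration $h_{jk}=\partial_{\xi_j}G_k$, $G_k=\partial_{\xi_k}F$ on the simply connected $\xi$-space is standard and correct, and the factor-of-two discrepancy is indeed only a normalization of $F$.

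The one genuine gap is at the very start: you \emph{assume} the existence of an open dense $T_c^n$-orbit and then prove that the action on it is free and that the orbit map is a biholomorphism. But the existence of an open (and then dense) orbit is precisely the nontrivial content of the first sentence of the Proposition, and your stabilizer argument does not produce it. It is where complete integrability and effectiveness actually enter: since the moment image has nonempty interior, $dm$ has rank $n$ at some point, so the real generators $t^{\#}$ are linearly independent there; together with $Jt^{\#}$ they span the tangent space, so that $T_c^n$-orbit is open. One then needs that this open orbit is unique and dense, e.g.\ because the union of non-open orbits is contained in the analytic set where the holomorphic generators are dependent, and two distinct open orbits would disconnect its complement. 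Either supply this argument or cite it from the structure theory of toric manifolds; as written, the first claim of the Proposition is presupposed rather than proved. A second, much smaller, loose end: effectiveness is assumed only for the real torus, and you invoke it for $T_c^n$; this extension deserves a sentence.
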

The set $X\setminus X_0$ is given as a vanishing set of some holomorphic vector fields, so it must be analytic.

If we introduce coordinates on $X_0$ coming from $(\zespolone^*)^n$ by $L: e^{x+ iy} \rightarrow x +iy$, the invariance of potential means that the function $F$ from the previous proposition depends only on $x$ variable in $\rzeczywiste^n$ and positive definiteness means that $F$ must be convex. Moreover, nothing in the proof actually requires the form to be smooth, so the conclusion easily extends to forms with more singular  coefficients, thus asserting that every closed positive and invarinat $(1,1)$-current in the cohomolgy class $[\omega]$ will admits a convex potential.

\begin{Prop}\label{Prop:moment}
 For the symplectic form $\omega$ as above, the moment map is
 \[
  \frac{\partial F}{\partial x} + c,
 \]
 with $c$ being any constant vector in $\rzeczywiste^n$.
\end{Prop}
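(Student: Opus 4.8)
The plan is to verify the defining identity of the moment map directly on the dense open set $X_0$, where the entire statement reduces to a short computation involving the Hessian of $F$; since $X_0$ is dense and any moment map is continuous, it suffices to establish the formula there.

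First I would fix the holomorphic coordinates $z_j = x_j + i y_j$ on $X_0$ induced by $L$, in which the torus acts by translation in the angular variables, so that the fundamental vector field generated by $t = (t_1,\dots,t_n) \in \mathfrak{t} \simeq \rzeczywiste^n$ is $t^{\#} = \sum_j t_j\, \partial/\partial y_j$. Since $F$ is a function of $x$ alone and $x_j = (z_j + \bar z_j)/2$, one has $\partial F = \tfrac12 \sum_j F_j\, dz_j$ with $F_j := \partial F/\partial x_j$, and then a direct computation — using the symmetry of $F_{jk} := \partial^2 F/\partial x_j\partial x_k$ to kill the $dx_j\wedge dx_k$ and $dy_j\wedge dy_k$ parts — gives
\[
 \omega|_{X_0} = 2i\partial\bar\partial F = \sum_{j,k} F_{jk}\, dx_j \wedge dy_k .
\]

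Next I would contract this with $t^{\#}$. Because $dx_j(t^{\#}) = 0$ and $dy_k(t^{\#}) = t_k$, one has $\iota_{t^{\#}}(dx_j\wedge dy_k) = -t_k\, dx_j$, so
\[
 \omega_p(t^{\#}, \cdot) = -\sum_j \Bigl(\sum_k F_{jk}\, t_k\Bigr) dx_j .
\]
On the other hand, with $m := \partial F/\partial x + c$ for an arbitrary constant vector $c \in \rzeczywiste^n$,
\[
 d\langle m(p), t\rangle = d\Bigl(\sum_j t_j F_j\Bigr) = \sum_{j,k} F_{jk}\, t_j\, dx_k = \sum_j \Bigl(\sum_k F_{jk}\, t_k\Bigr) dx_j ,
\]
where the last equality again uses the symmetry of $F_{jk}$. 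Comparing the two displays yields $-d\langle m(p), t\rangle = \omega_p(t^{\#}, \cdot)$, which is exactly the moment-map identity; the constant $c$ drops out precisely because $d$ annihilates constants, and invariance of $m$ under the action is immediate since $\partial F/\partial x$ depends only on $x$, which the angular action fixes.

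The computation is routine; the only points that require care are pinning down the normalization of $\omega = 2i\partial\bar\partial F$ so that the factors of $i$, the factor $2$, and the overall sign come out right, and correctly identifying $t^{\#}$ with the angular vector field $\sum_j t_j\,\partial/\partial y_j$ coming from the $(\zespolone^*)^n$-action. Everything else is forced by the symmetry of the Hessian and the fact that $F = F(x)$, so I would devote most of the write-up to fixing conventions and then let the two-line contraction finish the argument.
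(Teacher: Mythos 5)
Your computation is correct and is exactly the standard verification: with $\omega=2i\partial\bar\partial F=\sum_{j,k}F_{jk}\,dx_j\wedge dy_k$ and $t^{\#}=\sum_j t_j\,\partial/\partial y_j$, the contraction and the differential of $\langle \partial F/\partial x+c,t\rangle$ match with the sign convention $-d\langle m,t\rangle=\omega(t^{\#},\cdot)$ used in the paper, and the constant $c$ is killed by $d$. The paper states this proposition without proof as background (it is the computation underlying Guillemin's description of toric K\"ahler potentials), so there is nothing to compare against; the only cosmetic remark is that the appeal to density of $X_0$ is not really needed, since the formula itself only makes sense on $X_0$ where the potential $F$ lives.
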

 Finally, we recall the theorem of Atiyah~\cite{Atiyah}, Guillemin and Sternberg~\cite{GuilleminSternberg}:
\begin{Thm}
 The image of $X$ through the moment map is a compact convex polytope in $\rzeczywiste^n$.
\end{Thm}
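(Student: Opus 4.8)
The plan is to avoid the general symplectic machinery and instead read off the image of the moment map directly from the convex potential supplied by the two preceding propositions. On the dense open orbit $X_0 \cong (\zespolone^*)^n$, in the logarithmic coordinate $L$, the proposition of Guillemin furnishes an invariant convex potential $F:\rzeczywiste^n\to\rzeczywiste$, and Proposition~\ref{Prop:moment} identifies the moment map restricted to $X_0$ with $\nabla F$ up to an additive constant vector (which merely translates the image and may be ignored). Since $X_0$ is dense in $X$ and $m$ is continuous on the compact manifold $X$, we have
\[
 m(X) = \overline{m(X_0)} = \overline{\nabla F(\rzeczywiste^n)}.
\]

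First I would settle compactness and convexity, which come almost for free. Compactness is immediate: $m$ is continuous and $X$ is compact, so $m(X)$ is compact. For convexity, I would invoke the equivalences relating $\partial F$ and $\partial F^*$ recalled in the background section: a point lies in the range of $\partial F$ exactly when it lies in the domain of $\partial F^*$, and by Rademacher's Theorem the points of differentiability of $F$ have full measure, so the range of $\nabla F$ is dense in the range of $\partial F$. Passing to closures yields $\overline{\nabla F(\rzeczywiste^n)} = \overline{\dom(F^*)}$. As $F^*$ is convex, $\dom(F^*)$ is a convex set and so is its closure; hence $m(X)=\overline{\dom(F^*)}$ is a compact convex body. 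Denoting it $P$, we recover the description used throughout the paper: $F\in\mathcal{P}$ with $F^*=+\infty$ precisely off $P$.

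It remains to show that $P$ is a polytope, i.e. the intersection of finitely many half-spaces. Here I would use that $X$ compactifies $X_0$ with boundary $X\setminus X_0$ an analytic set, which therefore has finitely many irreducible components $D_1,\dots,D_N$. Each $D_j$ is the fixed locus of a circle subgroup generated by a primitive rational vector $\xi_j\in\mathbb{Z}^n\subset\rzeczywiste^n$, and as one approaches $D_j$ along $X_0$ the potential $F$ degenerates linearly in the direction $\xi_j$ while $\langle\nabla F,\xi_j\rangle$ remains bounded and tends to a limit $c_j$. This produces a supporting hyperplane $\{p:\langle p,\xi_j\rangle = c_j\}$ of $P$ with rational inward normal $\xi_j$, and conversely every boundary point of $P$ arises as such a limit. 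Consequently $P=\bigcap_{j=1}^{N}\{p:\langle p,\xi_j\rangle\ge c_j\}$ is cut out by finitely many half-spaces, hence a polytope. (Equivalently, the classical Morse-theoretic argument of Atiyah and of Guillemin--Sternberg identifies $P$ with the convex hull of the images of the finitely many $T^n$-fixed points, which are exactly the vertices.)

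The main obstacle is this last step: making rigorous the asymptotics of the convex potential $F$ near each boundary divisor and matching each $D_j$ to a single facet of $P$ with normal $\xi_j$. The convex-analytic identities alone deliver convexity and compactness but say nothing about finiteness of facets; establishing the dictionary ``boundary divisor $\leftrightarrow$ facet with primitive rational normal'' requires the local equivariant normal form of the toric manifold along $D_j$ (the fan, or Delzant, data), which governs the precise rate at which $F$ degenerates in the direction $\xi_j$. Once that correspondence is in hand, the finiteness of the irreducible components of the analytic set $X\setminus X_0$ immediately forces $P$ to have finitely many facets, completing the argument.
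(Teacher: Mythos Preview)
The paper does not prove this theorem: it is merely recalled as the classical convexity theorem of Atiyah~\cite{Atiyah} and Guillemin--Sternberg~\cite{GuilleminSternberg}, with no argument given. So there is no ``paper's own proof'' to compare your proposal against.

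On the merits of your attempt: the compactness and convexity parts are essentially correct, and in fact cleaner than you present them. Since $\omega$ is an honest K\"ahler form, the potential $F$ is smooth with $D^2F>0$, so $\partial F=\nabla F$ everywhere and the appeal to Rademacher is unnecessary. The identification $\overline{\nabla F(\rzeczywiste^n)}=\overline{\dom(F^*)}$ then follows directly from the standard inclusions $\mathrm{ri}\bigl(\dom(F^*)\bigr)\subseteq\mathrm{range}(\partial F)\subseteq\dom(F^*)$, and the right-hand side is convex.

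The genuine gap is exactly the one you flag yourself: nothing in the convex-analytic toolkit you invoke forces $\overline{\dom(F^*)}$ to have only finitely many facets. Your proposed dictionary ``irreducible component of $X\setminus X_0$ $\leftrightarrow$ facet with primitive integral normal $\xi_j$'' is precisely the Delzant correspondence, and making it rigorous requires the local equivariant normal form of the action near each boundary stratum --- input that is independent of, and substantially harder than, anything used in the paper. The original proofs you allude to parenthetically take a different route altogether: they run Morse theory on the components of $m$ to show connectedness of level sets (hence convexity of the image) and identify the image with the convex hull of the images of the finitely many fixed points. That argument never needs the asymptotics of $F$ near the divisors, which is why it succeeds where your sketch stalls.
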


In the case of completely integrable actions the polytopes that can arise as images of moment maps are called Delzant polytopes. Conversely for each Delzant polytope there exists a K\aumlaut hler manifold with completely integrable torus action and a moment map that maps to this polytope.

\subsection{Toric pluripotential theory}

The class of plurisubharmonic functions that are torus invariant will be denoted by $PSH_{tor}(X,\omega) = \{\phi \in PSH(X,\omega)\; |\; \forall z\in X, t\in T^n \;| \; \phi(t\cdot z) = \phi(z) \}$. The results of the preovious section imply that to each such function corresponds a convex function on $\rzeczywiste^n$.

More precisely, if set $X_0$ are coordinate map $L$ are as in the previous subsection then for any $v \in PSH_{tor}(X,\omega)$ the form $\omega_v = \omega + i\partial\bar\partial v$ is  still invariant and closed there, so restricting to $X_0$ there is a convex $F_v$ function given by
\[
 F_v \circ L = F_0 \circ L + v, 
\]
with $F_0\circ L$ being the potential for $\omega$. Of course if use the formula above to produce a plurisubharmonic function it will only be defined on $X_0$, but since $X\setminus X_0$ is analytic, the function will extend to the whole $X$.

Not every convex function can be a potential for an invariant K\aumlaut hler form. If $P$ is the Delzant polytope of the manifold $(X,\omega)$ then the following Propostion holds (see e.g.~\cite{CGSZ} for a proof).

\begin{Prop}
  The following are equivalent:
  \begin{enumerate}
    \item $v \in PSH_{tor}(X)$,
    \item $F_v \in \mathcal{P}$.
  \end{enumerate}
\end{Prop}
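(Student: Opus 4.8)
The plan is to translate both sides of the equivalence into statements about the convex function $F_v$ on $\rzeczywiste^n$ defined, on $X_0$, by $F_v\circ L = F_0\circ L + v$, where $F_0$ is a potential for $\omega$. The basic dictionary is that, in the logarithmic coordinate $L$ identifying $X_0$ with $(\zespolone^*)^n$, a torus-invariant $v$ is $\omega$-plurisubharmonic on $X_0$ if and only if $F_v$ is convex on $\rzeczywiste^n$: for an $x$-only function the form $i\partial\bar\partial$ equals, up to a fixed positive constant, the real Hessian $D^2$, and positivity of the latter in the sense of distributions is exactly convexity; note this requires no smoothness of $v$. So the only thing left to understand is the correspondence between the growth condition ``$F_v\le\phi_P+C$'' on the convex side and the ``defined on all of $X$'' condition on the complex side.

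The key preliminary step is to pin down the reference potential, namely to show $F_0\in\mathcal{P}_+$, i.e. $\phi_P - C_0 \le F_0 \le \phi_P + C_0$, after normalising the moment map by the constant $c$ of Proposition~\ref{Prop:moment} so that $0$ lies in the interior of $P$. By that proposition $\nabla F_0$ is the normalised moment map, whose image is the polytope $P$; integrating $\nabla F_0$ along rays from the origin yields $F_0(x) - F_0(0) \le \phi_P(x)$, giving $F_0\in\mathcal{P}$. For the matching lower bound I would invoke the fact that $\omega$ differs from the canonical Guillemin toric form attached to $P$ (which is cohomologous to $\omega$, since the moment polytope determines the class) by $i\partial\bar\partial u$ with $u$ a smooth, hence bounded, function on the compact $X$, and the canonical Guillemin potential lies in $\mathcal{P}_+$ by its explicit boundary behaviour near the faces of $P$; alternatively one argues directly from the Guillemin-type description of the degeneration of $F_0$ along $X\setminus X_0$.

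Granting $F_0\in\mathcal{P}_+$, the implication $(1)\Rightarrow(2)$ is immediate: if $v\in PSH_{tor}(X,\omega)$ then $\omega_v\ge 0$ on $X$, in particular on $X_0$, so $F_v$ is convex; and $v$ is upper semicontinuous on the compact $X$, so $v\le M$ and therefore $F_v = F_0 + v \le \phi_P + C_0 + M$, i.e. $F_v\in\mathcal{P}$.

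For $(2)\Rightarrow(1)$, convexity of $F_v$ gives that $v$ is $\omega$-psh on $X_0$, while $F_v \le \phi_P + C$ together with $F_0 \ge \phi_P - C_0$ gives $v = F_v - F_0 \le C + C_0$ on $X_0$, so $v$ is bounded above there. It then remains to extend $v$ from $X_0$ to an $\omega$-psh function on all of $X$, and this is the step I expect to require the most care: $X\setminus X_0$ is an analytic subset, so locally it is contained in the zero set of a holomorphic function, and a plurisubharmonic function that is locally bounded above near such a set extends plurisubharmonically across it; since $\omega$ is smooth in a neighbourhood of $X\setminus X_0$ as well, the same local argument applied to local potentials of $\omega$ produces $\tilde v\in PSH(X,\omega)$ extending $v$, and $\tilde v$ is automatically torus-invariant because $X_0$ is dense and invariance passes to the upper semicontinuous regularisation. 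This $\tilde v$ is the desired element of $PSH_{tor}(X,\omega)$. Thus the convexity--positivity dictionary and the estimates on $F_0$ are essentially bookkeeping built on Proposition~\ref{Prop:moment}, while the genuine content is the removable-singularity argument across the toric boundary divisor $X\setminus X_0$.
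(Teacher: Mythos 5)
Your proposal is correct and follows essentially the same route as the source the paper defers to: the paper gives no proof of this proposition, citing~\cite{CGSZ}, and the standard argument there is exactly your three ingredients — the convexity/positivity dictionary in logarithmic coordinates, the two-sided estimate $\phi_P - C_0 \le F_0 \le \phi_P + C_0$ for the reference potential, and the extension of an $\omega$-plurisubharmonic function bounded above across the analytic set $X\setminus X_0$ (a fact the paper itself records just before the statement). The one step you invoke rather than prove, namely $F_0 \in \mathcal{P}_+$ via comparison with the explicit Guillemin potential, is the same external toric input used in the cited reference, so this is consistent with the paper's level of detail rather than a gap.
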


Finally, the two \Monge measures coincide up to a constant. Specifically, the complex \Monge measure is defined as
\[
 MA^{\zespolone}(v)[A] := \int_A (\omega + i\partial\bar\partial v)^n
\]
and the real measure as
\[
 MA^{\rzeczywiste}(F)[B] := |\partial F(B)|
\]
for Borel sets $A,B$ in $\zespolone^n$ and $\rzeczywiste^n$ respectively. Here $|\cdot|$ is the Lebesgue measure. For $C^2$ convex functions it coincides with the measure
\[
 MA^{\rzeczywiste}(F)[B] := \int_B \det(D^2 F).
\]

For invariant plurisubharmonic funtion the two concepts are connected through the following proposition.

\begin{Prop}\label{Prop:rownosc-miar}
 Let $\phi \in PSH_{tor}(X)$, we identify $X_0$ with $(\zespolone^*)^n$. Then for any $f \in C_b(\rzeczywiste^n)$
 \[
  \int_{X_0} (f\circ L) \, MA^{\zespolone}(\phi) = \frac{n!}{(2\pi)^n}\int_{\rzeczywiste^n} f \,MA^{\rzeczywiste}(F_\phi).
 \]
\end{Prop}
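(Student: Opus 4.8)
The plan is to reduce to the case where $F_\phi$ is smooth and strictly convex, to handle that case by an explicit change of variables in the logarithmic coordinates, and to recover the general case by a monotone approximation. By Lemma~\ref{Lem:aproksymacja} I would write $F_\phi$ as a decreasing limit (locally uniform, by Dini's theorem) of smooth strictly convex functions $F_{\phi_j}\in\mathcal{P}_+$, so that correspondingly $\phi_j\in PSH_{tor}(X)$ decreases to $\phi$. It then suffices to prove the identity for each $\phi_j$ and pass to the limit, and it is enough to test against $f\in C_c(\rzeczywiste^n)$: both measures in play are finite, since $\int_{X_0}MA^{\zespolone}(\phi)\le\int_X\omega^n<\infty$ and $\int_{\rzeczywiste^n}MA^{\rzeczywiste}(F_\phi)=|\partial F_\phi(\rzeczywiste^n)|<\infty$ (because $F_\phi\in\mathcal{P}$ forces $\partial F_\phi(\rzeczywiste^n)\subseteq\overline P$), and two finite Radon measures on $\rzeczywiste^n$ that agree against all of $C_c$ coincide, so the $C_c$ identity automatically upgrades to all of $C_b$.

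For the smooth case I would work in the coordinate $w=x+iy$ on $X_0\cong(\zespolone^*)^n$ given by $L$, in which $\omega_{\phi_j}$ is a fixed constant multiple of $i\partial\bar\partial F_{\phi_j}$ and $F_{\phi_j}$ depends on $x$ only. Using that $\partial_{w_k}\partial_{\bar w_l}$ acts on functions of $x$ as $\tfrac14\partial_{x_k}\partial_{x_l}$, that $\bigl(\sum_{k,l}a_{kl}\,dw_k\wedge d\bar w_l\bigr)^{\!n}=n!\det(a)\,dw_1\wedge d\bar w_1\wedge\cdots\wedge dw_n\wedge d\bar w_n$, and that $i\,dw_k\wedge d\bar w_k=2\,dx_k\wedge dy_k$, one obtains $\omega_{\phi_j}^n=c\,\det(D^2F_{\phi_j})\,dx\wedge dy$ for an explicit positive constant $c$, with $dx\wedge dy$ the Lebesgue volume element. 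Integrating out the angular (torus) variables $y$ contributes a finite factor which, combined with $MA^{\rzeczywiste}(F_{\phi_j})=\det(D^2F_{\phi_j})\,dx$ (valid for $C^2$ convex functions) and the normalisations built into the definitions of $L$, $\omega$ and $MA^{\zespolone}$, collapses to exactly $\tfrac{n!}{(2\pi)^n}$. This establishes the asserted identity with $\phi$ replaced by $\phi_j$.

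It remains to pass to the limit in that identity, tested against $f\in C_c(\rzeczywiste^n)$. On the complex side $f\circ L$ then lies in $C_c(X_0)$, and $MA^{\zespolone}(\phi_j)\to MA^{\zespolone}(\phi)$ weakly on $X_0$ by continuity of the complex \Monge operator along decreasing sequences of plurisubharmonic functions. On the real side $F_{\phi_j}\to F_\phi$ locally uniformly, so Theorem~\ref{Thm:Attouch} gives the graphical convergence $\partial F_{\phi_j}\to\partial F_\phi$; the uniform Lipschitz bound available on $\mathcal{P}$ together with Corollary~\ref{Cor:ogr-na-zwartych} yields the equicontinuity needed to invoke Theorem~\ref{Thm:rowno-grafo}, hence pointwise convergence of the subgradients on $\dom(\nabla F_\phi)$, a set of full measure by Theorem~\ref{Lem:Rademacher}; from this one deduces $MA^{\rzeczywiste}(F_{\phi_j})\rightharpoonup MA^{\rzeczywiste}(F_\phi)$. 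Letting $j\to\infty$ proves Proposition~\ref{Prop:rownosc-miar} for $f\in C_c(\rzeczywiste^n)$, hence for all $f\in C_b(\rzeczywiste^n)$. I expect the real-side weak convergence to be the main obstacle: it must hold when tested against \emph{every} $f\in C_c(\rzeczywiste^n)$, not merely along subsequences, and this is exactly where the equicontinuity of the subgradient maps and the uniform Lipschitz control on $\mathcal{P}$ are indispensable; the complex-analytic input and the bookkeeping of constants are routine by comparison.
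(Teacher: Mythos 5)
Your outline coincides with the paper's own (one\nobreakdash-line) proof: a direct computation in logarithmic coordinates for smooth strictly convex potentials, followed by approximation via Lemma~\ref{Lem:aproksymacja} and the classical convergence theorems on both sides. The reduction to $f\in C_c(\rzeczywiste^n)$ and the smooth-case bookkeeping are fine. The one step that does not go through as written is the final deduction on the real side: you establish pointwise convergence of the subgradients $\partial F_{\phi_j}\to\partial F_\phi$ on $\dom(\nabla F_\phi)$ and then assert that this yields $MA^{\rzeczywiste}(F_{\phi_j})\rightharpoonup MA^{\rzeczywiste}(F_\phi)$. But the real \Monge measure is $B\mapsto|\partial F(B)|$, i.e.\ it is governed by the \emph{images} of the subgradient --- equivalently, it is the push-forward of Lebesgue measure under $\nabla F^{*}$, so that $\int f\,MA^{\rzeczywiste}(F)=\int f(\nabla F^{*}(p))\,dp$ over the gradient image --- and almost-everywhere convergence of $\nabla F_{\phi_j}$ itself gives no control over such integrals. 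To close the gap you should either invoke the classical theorem that locally uniform convergence of convex functions implies weak convergence of their \Monge measures (this is what the paper means by ``classical convergence theorems for convex functions''), or run your Attouch/equicontinuity argument (Theorems~\ref{Thm:Attouch} and~\ref{Thm:rowno-grafo} together with Corollary~\ref{Cor:ogr-na-zwartych}) on the Legendre transforms $F_{\phi_j}^{*}$ rather than on $F_{\phi_j}$, obtaining $\nabla F_{\phi_j}^{*}\to\nabla F_\phi^{*}$ almost everywhere on $\text{int}(P)$ and concluding by dominated convergence; the latter is exactly the argument the paper carries out in Section~2.1 for the analogous limit there, so the tools are already at hand. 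A minor additional caveat: the weak continuity of $MA^{\zespolone}$ along decreasing sequences needs $\phi$ to be locally bounded or of full \Monge mass; for general $\phi\in PSH_{tor}(X)$ this requires the extended Bedford--Taylor theory, a point the paper also leaves implicit.
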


The proof is just a straightforward computation (see e.g.~\cite{CGSZ}) in the smooth case and then the application of classical convergence theorems for convex and plurisubharmonic functions.

\subsubsection{g-\Monge measure}

Following the preprint~\cite{BermanWitt} we define the complex $g$-\Monge measure or the complex transported \Monge measure as
\[
 MA_g(\phi) = g(\mu_\phi)MA(\phi).
\]

From the Proposition~\ref{Prop:moment} and the definition of the real transported \Monge measure it is not hard to see that~\ref{Prop:rownosc-miar} extends for smooth functions to transported measures. In the more general case, especially with the torus of smaller rank, the definition becomes more intricate. 

If we denote by $\mathcal{E}_g$ the set of all $PSH_{tor}$ functions with full $MA_g$ mass, i.e. those functions for which
\[
 \int_X MA_g(\phi) = \int_P g(p)dp = 1
\]
then the following crucial continuity statement holds:
\begin{Thm}[\cite{BermanWitt}, Theorem 2.7]\label{Thm:zbieznosc-zesp}
 If $\phi_j$ is a sequence in $\mathcal{E}_g$ decreasing to $\phi$ in $\mathcal{E}_g$ then
 \[
  MA_g(\phi_j) \rightarrow MA_g(\phi)
 \]
 in the weak topology of measures.
\end{Thm}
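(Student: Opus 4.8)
The plan is to transfer the statement to $\rzeczywiste^n$ through the logarithmic chart $L$, to recognise the real transported \Monge measure there as an optimal-transport push-forward of $g\,dp$ under the gradient of a Legendre transform, and then to feed the convergence of Legendre transforms into a dominated convergence argument. For the first step, note that all the measures $MA_g(\phi_j)$ and $MA_g(\phi)$ are $T^n$-invariant, so, averaging test functions over the torus, it suffices to prove $\int_X f\,MA_g(\phi_j)\to\int_X f\,MA_g(\phi)$ for invariant $f\in C(X)$; such $f$ are bounded ($X$ is compact) and descend through $L$ to some $\tilde f\in C_b(\rzeczywiste^n)$. Since $\phi_j,\phi\in\mathcal{E}_g$ carry full mass, the analytic --- hence pluripolar --- set $X\setminus X_0$ is negligible for each of these measures, and the transported form of Proposition~\ref{Prop:rownosc-miar} reduces the claim to
\[
\int_{\rzeczywiste^n}\tilde f\;g\big(\nabla F_{\phi_j}+c\big)\,MA^{\rzeczywiste}(F_{\phi_j})\ \longrightarrow\ \int_{\rzeczywiste^n}\tilde f\;g\big(\nabla F_{\phi}+c\big)\,MA^{\rzeczywiste}(F_{\phi}),
\]
where $F_{\phi_j}\searrow F_\phi$, all of these convex functions lying in $\mathcal{P}$ and hence being Lipschitz with one common constant, since their subgradients stay inside the fixed polytope $P$.

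Next I would rewrite the real $g$-\Monge measure as a push-forward. Absorbing $c$ into the potential (a harmless translation of $P$) and setting $G_j:=F_{\phi_j}+\langle c,\cdot\rangle$, $G:=F_{\phi}+\langle c,\cdot\rangle$, the change of variables $p=\nabla G_j(x)$, $x=\nabla G_j^*(p)$ gives, when $G_j$ is smooth and strictly convex, the identity $g(\nabla G_j)\,MA^{\rzeczywiste}(G_j)=(\nabla G_j^*)_{\#}\!\left(g\,dp|_P\right)$; in the singular regime this identity is what makes $MA_g$ meaningful at all, and it propagates from the smooth case through the decreasing approximation of Lemma~\ref{Lem:aproksymacja} together with the continuity of $MA^{\rzeczywiste}$ along decreasing sequences. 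Crucially, the right-hand side involves $g\,dp$ over \emph{all} of $P$ rather than over a proper subpolytope, and this is exactly what the full-mass hypothesis guarantees ($\dom G_j^*=\dom G^*=P$ up to a null set). The claim is thereby reduced to
\[
\left(\nabla G_j^*\right)_{\#}\!\left(g\,dp|_P\right)\ \longrightarrow\ \left(\nabla G^*\right)_{\#}\!\left(g\,dp|_P\right)\qquad\text{weakly.}
\]

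For the last step I would invoke the convergence of Legendre transforms. Since $G_j$ is a uniformly Lipschitz, monotone (here decreasing) sequence of convex functions with limit $G$, the lemma on convergence of Legendre transforms stated in the Introduction applies and yields $G_j^*\to G^*$ in $W^{1,\infty}_{loc}(\text{int}(P))$ --- in fact $G_j^*$ increases to $G^*$ --- whence $\nabla G_j^*\to\nabla G^*$ in $L^\infty_{loc}(\text{int}(P))$, hence Lebesgue-almost-everywhere on $P$ since $\partial P$ is null. (Without that lemma one can instead combine Attouch's Theorem~\ref{Thm:Attouch} with Theorem~\ref{Thm:rowno-grafo}, the required equicontinuity following from the uniform local boundedness of the $\partial G_j^*$ that accompanies the locally uniform convergence.) Then, for every invariant $f$ with descent $\tilde f$,
\[
\int_P\tilde f\big(\nabla G_j^*(p)\big)\,g(p)\,dp\ \longrightarrow\ \int_P\tilde f\big(\nabla G^*(p)\big)\,g(p)\,dp
\]
by dominated convergence --- $\tilde f$ is continuous and bounded, $g\le C$, and $|P|<\infty$ --- and undoing the reductions of the first two steps completes the proof.

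The genuine difficulty is not this soft limiting argument but controlling where mass can go in the limit: one must rule out that a piece of $MA_g(\phi_j)$ escapes onto $X\setminus X_0$, onto $\partial P$, or off to infinity in the chart. This is precisely what the hypothesis $\phi_j,\phi\in\mathcal{E}_g$ buys us --- the candidate limit $(\nabla G^*)_\#(g\,dp|_P)$ already carries the total mass $\int_P g$ shared by every $MA_g(\phi_j)$ --- so a standard no-loss-of-mass argument upgrades the almost-everywhere (vague) convergence above to genuine weak convergence. The other point that needs care is making the push-forward identity of the second step rigorous for merely convex potentials, which is exactly what the smooth approximation of Lemma~\ref{Lem:aproksymacja} is for.
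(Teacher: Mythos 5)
This theorem is not proved in the paper at all: it is quoted verbatim from Berman--Witt Nystr\"om (\cite{BermanWitt}, Theorem 2.7), where it is established by general pluripotential-theoretic methods for the class $\mathcal{E}_g$ on an arbitrary (not necessarily completely integrable) toric setting. Your argument is a genuinely different, toric-specific proof: you descend to $\rzeczywiste^n$, identify $MA_g$ with the push-forward $(\nabla F_\phi^*)_{\#}(g\,dp|_P)$, and conclude by a.e.\ convergence of $\nabla F_{\phi_j}^*$ plus dominated convergence. The structure is sound, and it is worth noting that it recycles exactly the machinery the paper deploys later in its main Corollary (Proposition~\ref{Prop:rownosc-miar}, the Legendre-transform convergence lemma, Theorems~\ref{Thm:Attouch} and~\ref{Thm:rowno-grafo}); in effect you have shown that, in the full-rank case, the citation of \cite{BermanWitt} could be replaced by the real-variable argument, since your proof gives the continuity along \emph{any} decreasing sequence in $\mathcal{E}_g$, not just the smooth approximants of Lemma~\ref{Lem:aproksymacja}. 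What each approach buys: the cited proof works without complete integrability and for more general $g$, while yours is elementary and self-contained within convex analysis.

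Two caveats. First, the load-bearing step is the identity $MA_g(\phi)=(\nabla F_\phi^*)_{\#}(g\,dp|_P)$ for \emph{singular} $\phi\in\mathcal{E}_g$; you gesture at it via ``this identity is what makes $MA_g$ meaningful at all'' and via propagation through decreasing approximation, but the latter route quietly uses continuity of $MA_g$ along the approximating sequence --- an instance of the very statement being proved. The clean way out is to take $B\mapsto\int_{\partial F_\phi(B)}g\,dp$ as the definition of the real $g$-measure (which equals the push-forward up to Lebesgue-null sets by $p\in\partial F(x)\Leftrightarrow x\in\partial F^*(p)$ and Rademacher) and then verify once that this matches the complex-side definition of \cite{BermanWitt}; as written, that reconciliation is asserted rather than checked. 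Second, for non-smooth $\phi_j$ the maps $\nabla F_{\phi_j}^*$ are only defined a.e., so the equicontinuity/graphical-convergence argument must be run for the set-valued subgradients $\partial F_{\phi_j}^*$; this goes through, but it is not literally the lemma stated in the introduction, which is formulated for the smooth strictly convex approximants. Neither point is fatal, but both deserve a sentence in a complete write-up.
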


\section{Full rank existence and uniqueness}

Given a probability measure $g(p)dp$ on $P$ and \textit{any} probability measure $\mu$ on $\rzeczywiste^n$, we would like to solve the equation
\[
 MA^{\rzeczywiste}_g(u) = \mu
\]
in some appropriate sense. One can not apply McCann's theorem directly since for example $\mu = \delta_x$ would prevent the existence of the transport map, thus we must use the regularity of $g$.

Suppose that we have a smooth strictly convex solution $u$, so that every term in $MA_g^{\rzeczywiste}(u)$ is well-defined and moreover so is $\nabla u^*$. By the fact that for any $x$ and any $p$, $\nabla u(\nabla u^*(p)) = p$ and $\nabla u^*(\nabla u(x)) = x$ we define the solution through the change of variables formula. Thus
\begin{equation}\label{eq:definicja}
 \int_{\rzeczywiste^n} f(x)g(\nabla u(x))MA^{\rzeczywiste}(u) = \int_{P}f(\nabla u^*(p))g(p)dp = \int_{\rzeczywiste^n} f\,d\mu
\end{equation}
and a function $u \in \mathcal{P}$ such that the second equality holds for any contiunous bounded function $f$ is defined to be a solution.

The fact that there is such a solution follows easily from McCann's theorem. Suppose that $\phi$ is the convex function whose gradient transports $g(p)dp$ (understood as a measure on $\rzeczywiste^n$) to $\mu$. By the regularity of $g$ and McCann's theorem it must exist. Then $\nabla\phi$ is defined $g(p)dp$-almost everywhere and since $P$ is convex we can take $\phi$ to be $+\infty$ outside of $P$. Thus after possibly fixing $\phi$ on $\partial P$ so that it is lower semi-continuous, its Legendre transform $\phi^*$ becomes unique and defined everywhere on $\rzeczywiste^n$ and thus belongs to the class $\mathcal{P}$ since by lower semicontinuity $\phi^{**} = \phi$. The convex function $u=\phi^*$ is the unique (up to additive constant) solution to the transported \Monge problem in the class $\mathcal{P}$. Indeed, since $\nabla\phi$ transports $g(p)dp$ to $\mu$ it means that for any $f\in C_b(\rzeczywiste^n)$
\[
 \int_{\rzeczywiste^n} f\,d\mu = \int_P f(\nabla\phi(p))g(p)dp = \int_P f(\nabla u^*(p))g(p)dp.
\]
If there was to be another solution $v$ in the class $\mathcal{P}$ then its Legendre transform would have been $+\infty$ on the complement of $P$ and lower semicontinuous on its boundary and it would induce a transport of $g(p)dp$ to $\mu$, so by McCann's uniqueness theorem $\nabla u^* = \nabla v^*$ $g\,dp$-almost everywhere, and since $g > 0$, by Lemma~\ref{Lem:rownosc-pw} we get $u^* = v^*$ (mod $\rzeczywiste$) everywhere on $\text{int}(P)$.

\vspace{-1em}
\paragraph{Remark on uniqueness.} Of course the uniqueness statement becomes false if we allow functions outside of class $\mathcal{P}$. Suppose that $\mu = \delta_0$, then the solution is obviously $u=\phi_P$, so that $u^*\equiv 0$ on $P$. But now adding to $u$ any convex function $v$ such that $\min v = v(0)$ would also give a solution, since $(u+v)^*\equiv 0$ on $P$.

\begin{Ex}
 Finally, we would like to point out that if the assumptions of McCann's theorem are not satisfied for at least one of the measures there might not be a weak solution. For example, if $\mu =\delta_{[-1,1]\times \{0\}}$ and $\nu = \delta_{\{0\}\times [-1,1] }$ are measures on $\rzeczywiste^2$, then it is impossible to find a solution. Indeed, the only candidate is $u(x,y) = |y|$ and it is easy to see that it can not be the solution since $u^*|_{\{0\}\times [-1,1]} \equiv 0$.
\end{Ex}

\subsection{The complex case}

\begin{Cor}
 The solution to the real problem in $\rzeczywiste^n$ induces a unique solution to the $g$-\Monge problem on toric manifolds.
\end{Cor}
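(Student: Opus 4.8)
The plan is to push the unique real solution $u=\phi^{*}\in\mathcal{P}$ constructed above over to $X$ and to certify it by a monotone approximation, in which the transported version of Proposition~\ref{Prop:rownosc-miar} supplies a local identity on $X_{0}$ and the Legendre transform lemma of the introduction lets me pass that identity to the limit. First I would reduce $\mu$ to a measure on $\rzeczywiste^{n}$: as $X\setminus X_{0}$ is analytic it is polar, so $\mu$ gives it no mass; identifying $X_{0}$ with $\rzeczywiste^{n}\times T^{n}$ through $L$ and using invariance, $\mu=\tilde\mu\otimes d\theta$ for a unique probability measure $\tilde\mu$ on $\rzeczywiste^{n}$, with $d\theta$ the normalised Haar measure on $T^{n}$. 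No regularity of $\tilde\mu$ is needed, since McCann's theorem is applied with the absolutely continuous measure $g(p)\,dp$ as source. Feeding $\tilde\mu$ into the construction of the previous subsection produces $u=\phi^{*}\in\mathcal{P}$, unique modulo constants, with $\nabla\phi$ transporting $g(p)\,dp$ to $\tilde\mu$; since $\partial\phi$ is then defined $g(p)\,dp$-a.e.\ on $P$, hence Lebesgue-a.e., the domain of $\phi=u^{*}$ has full Lebesgue measure in $P$. As $u\in\mathcal{P}$, the assignment $F_{v}=u$ defines $v\in PSH_{tor}(X,\omega)$ (extended across $X\setminus X_{0}$), and I claim this $v$ is the solution.

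To certify $v$ I would approximate. By Lemma~\ref{Lem:aproksymacja} pick smooth strictly convex $u_{j}\in\mathcal{P}_{+}$ with $u_{j}\downarrow u$, and let $v_{j}$ be given by $F_{v_{j}}=u_{j}$, so $v_{j}\downarrow v$ with each $v_{j}\in PSH_{tor}(X,\omega)$ bounded; in particular $v_{j}\in\mathcal{E}_{g}$ and $MA(v_{j})$ charges no pluripolar set. For $f\in C_{b}(\rzeczywiste^{n})$, the transported form of Proposition~\ref{Prop:rownosc-miar} in the smooth case, together with the change of variables $p=\nabla u_{j}(x)$, gives
\[
\int_{X_{0}}(f\circ L)\,MA_{g}(v_{j})=\int_{P}f\big(\nabla u_{j}^{*}(p)\big)\,g(p)\,dp ,
\]
the normalising constant equalling $1$ (take $f\equiv1$ and use $\int_{X}MA_{g}(v_{j})=1$). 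Here is the crux: monotone convergence $u_{j}\downarrow u$ does not by itself control the transport maps $\nabla u_{j}^{*}$, but $\{u_{j}\}\cup\{u\}\subset\mathcal{P}$ is uniformly Lipschitz, so the Legendre transform lemma yields $u_{j}^{*}\to u^{*}=\phi$ in $W^{1,\infty}_{loc}$ on the interior of $\dom(\phi)$ — a set of full Lebesgue measure in $P$ — whence $\nabla u_{j}^{*}\to\nabla\phi$ Lebesgue-a.e.\ on $P$, hence $g(p)\,dp$-a.e. Dominated convergence and the transport identity~(\ref{eq:rownanie}) for $\nabla\phi$ then give $\int_{P}f(\nabla u_{j}^{*})g\,dp\to\int f\,d\tilde\mu$. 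Since $MA_{g}(v_{j})$ and $\mu$ are invariant probability measures carried by $X_{0}$, and every invariant $h\in C(X)$ restricts on $X_{0}$ to a function of the form $f\circ L$ with $f\in C_{b}(\rzeczywiste^{n})$, this yields $MA_{g}(v_{j})\to\mu$ weakly on $X$. Finally $v_{j}\downarrow v$ with $v_{j}\in\mathcal{E}_{g}$, so by the standard loss-of-mass behaviour of decreasing sequences in $\mathcal{E}_{g}$ (and $\mathcal{E}_{g}=\mathcal{E}$, since $g$ is bounded away from $0$ and $\infty$) the weak limit of $MA_{g}(v_{j})$ can charge no pluripolar set unless $v\in\mathcal{E}_{g}$, in which case it equals $MA_{g}(v)$; as $\mu$ charges no polar set, we conclude $v\in\mathcal{E}_{g}$ and $g(m_{v})MA(v)=\mu$. (Having $v\in\mathcal{E}_{g}$ in hand, one may instead simply quote Theorem~\ref{Thm:zbieznosc-zesp} for $MA_{g}(v_{j})\to MA_{g}(v)$.)

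For uniqueness, suppose $w\in PSH_{tor}(X,\omega)$ also satisfies $g(m_{w})MA(w)=\mu$; then $w\in\mathcal{E}_{g}$, and applying the same limiting computation to a smooth decreasing approximation of $F_{w}\in\mathcal{P}$ — once more using the Legendre transform lemma to pass Proposition~\ref{Prop:rownosc-miar} to the limit, together with the fact that membership in $\mathcal{E}_{g}$ forces $\dom(F_{w}^{*})$ to have full Lebesgue measure in $P$ — shows that $F_{w}$ solves the real transported \Monge problem $MA^{\rzeczywiste}_{g}(F_{w})=\tilde\mu$ in the sense of~(\ref{eq:definicja}) in the class $\mathcal{P}$. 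The uniqueness part of the real theory (McCann's uniqueness together with Lemma~\ref{Lem:rownosc-pw}) then forces $F_{w}=u+\mathrm{const}$ on $\mathrm{int}\,P$, hence $w=v+\mathrm{const}$ on $X_{0}$ and, by extension across the analytic set, on all of $X$.

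The main obstacle will be the passage from the weak convergence $MA_{g}(v_{j})\to\mu$ to the identity $MA_{g}(v)=\mu$: the real construction never sees pluripolar sets, so one must import the pluripotential-theoretic fact that, along a decreasing sequence in $\mathcal{E}_{g}$, any loss of mass of the \Monge measures is supported on a pluripolar set — and it is precisely here that the hypothesis that $\mu$ charges no polar set is consumed. The companion difficulty, namely that the smooth transported identity of Proposition~\ref{Prop:rownosc-miar} survives the passage to the limit at all, is exactly what the Legendre transform lemma is built to overcome, monotone convergence of the convex potentials being too weak on its own.
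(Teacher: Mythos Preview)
Your proof is correct and follows essentially the same route as the paper: reduce to $\rzeczywiste^n$ via invariance and the polarity of $X\setminus X_0$, approximate the real solution monotonically by smooth strictly convex functions from $\mathcal{P}_+$, invoke the Legendre transform lemma to pass the real transported identity to the limit, and match against Theorem~\ref{Thm:zbieznosc-zesp} on the complex side. The only differences are organizational: the paper actually \emph{proves} the Legendre lemma inline within this very argument (local uniform convergence of $F_n^*$, then local boundedness and equicontinuity of $\nabla F_n^*$ fed into Theorems~\ref{Thm:Attouch} and~\ref{Thm:rowno-grafo}) rather than citing it, and it applies Theorem~\ref{Thm:zbieznosc-zesp} directly without your loss-of-mass detour to certify $v\in\mathcal{E}_g$, while also leaving the explicit uniqueness step you supply to the reader.
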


\begin{proof}
 Firstly, we notice that the fact that $\mu$ does not put any mass on pluripolar sets implies that $X\setminus X_0$ as an analytic set has no mass. Thus we can restrict the problem to $X_0$. Moreover, since the measure is invariant, it can be interpreted as a measure on $\rzeczywiste^n$ also denoted by $\mu$. 
 
 Suppose now we have a real solution $F_\phi$ for the measure $\mu$, then one suspects that $\phi = (F_\phi - F_0)\circ L $ would be the solution for the  corresponding invariant measure. Indeed, $F_\phi$ is in $\mathcal{P}$, so it must correspond to some invariant psh function. Moreover, for smooth strictly convex functions the formula~(\ref{eq:definicja}) obiously translates by Proposition~\ref{Prop:rownosc-miar} to the complex setting. Finally, by Lemma~\ref{Lem:aproksymacja} there exists a decreasing sequence $F_n$ of smooth strictly convex functions that decreases to $F_\phi$, so by smoothness and Theorem~\ref{Thm:zbieznosc-zesp} $MA^{\rzeczywiste}_g(F_n) = MA^{\zespolone}_g(F_n-F_0)$ converges weakly to $MA^{\zespolone}_g(F_\phi)$. Thus the only thing left to show is that $MA^{\rzeczywiste}_g(F_n)$ converges weakly to $\mu$. 

Take $f \in C_b(\rzeczywiste^n)$ and put $f_n:= f\circ\nabla F^*_n$. We would like to show that $f_n$ converge almost everywhere to $f$. That would give us the desired assertion by the dominated convergence. 

First, let us prove that decreasing convergence of $F_n$ implies locally uniform convergence of $F_n^*$. Indeed, since $F_n$'s are uniformly Lipschitz, their pointwise convergence implies locally uniform convergence. Now $F \leq F_n$ implies $F^*_n \leq F^*$, take $p \in \text{int}P$ and suppose that the supremum in $F^*(p)$ is realized by $x^*$, thus
\[
 F^*(p)-F^*_n(p) = \sup_{x\in\rzeczywiste^n}\{\langle x,p\rangle - F(x)\} + \inf_{x\in\rzeczywiste^n}\{F_n(x)-\langle x,p\rangle\} \leq F_n(x^*) - F(x^*).
\]
Thus $F_n^*$ converges pointwise to $F^*$. If $K \subseteq \text{int}(P)$ is compact then by Proposition~\ref{Prop:subgradient-domkniety} and Corollary~\ref{Cor:ogr-na-zwartych} $\partial F^*(K)$ is compact and for every $q \in K$ the supremum in $F^*(q)$ is realized by some $y^*$ in $\partial F^*(q)$, thus the convergence is locally uniform.

Now, we will show that $F^*_n$ converging locally uniformly to $F^*$ implies that $\nabla F_n^*$ converges to $\nabla F^*$ almost everywhere and that would finish the proof. To do that we want to employ the Theorems~\ref{Thm:Attouch} and~\ref{Thm:rowno-grafo} restricted to $\dom(\nabla F^*)$. Thus the only thing left to show is the equicontinuity of $\nabla F^*_n$'s with respect to $\dom(\nabla F^*)$.  

In order to prove this we will first prove the following lemma:
\begin{Lem}
 $\nabla F^*_n$ are locally bounded independently of $n$.
\end{Lem}

\begin{proof}
Take any point $x \in \text{int}P \cap \dom(\nabla F^*)$ and pick a positive $\delta$ such that $B(x,\delta)$ is relatively compact in $\text{int}P$. By the boundedness of the subgradient (Corollary~\ref{Cor:ogr-na-zwartych}) there exists a positive $M$ such that
\[
 \partial F^*(B(x,\delta)) \subseteq B(\nabla F^*(x),M).
\]
  Now pick a positive $\eta$, starting from some $n$ we get that $0 \leq F^* - F^*_n \leq \eta$ over $B(x,\delta)$. We claim that $\nabla F_n^*(B(x,\delta/2)) \subset B(\nabla F^*(x),M + C)$ holds for some constant $C$, independent of $F^*_n$. 
  
  Indeed, by convexity it is enough to estimate the gradients on the boundary of $B(x,\delta/2)$. Take a point $y\in \partial B(x,\delta/2)$ such that $|\nabla F^*_n(y)|$ achieves maximum over $\partial B(x,\delta/2)$. The vector $\nabla F^*_n(y)$ must be pointed to the outside of $B(x,\delta/2)$ or at least be tangent to it. The ``boundary'' steepset case is $F^*_n(y) = F^*(y) -\eta$, $F^*$ growing at best possible rate from $y$ and tangent plane at $F^*_n(y)$ touching $F^*$ at the boundary of $B(x,\delta)$, then $\nabla F^*_n(y)$ would become the steepest if that happened over shortest possible interval which would be of length $\delta/2$. Thus finally $|\nabla F^*_n(y)| \leq \eta + p\delta/2$, where $p$ is the length of the longest vector in $B(\nabla F^*(x),M)$.
\end{proof}

With the Lemma in hand the rest of the proof is straightforward. Suppose the sequence is not equicontinuous at some point $x_0 \in \text{int}P \cap \dom(\nabla F^*)$. Thus there is a positive $\epsilon$ such that for any $k$ there is $y_k \in B(x_0,1/k) \cap \dom(\nabla F^*)$ such that 
\[
 |\nabla F_{n(k)}(x_0) - \nabla F_{n(k)}(y_k)| > \epsilon
\]
 with $n(k)$ being some subsequence of $\mathbb{N}$. But by above lemma the set $p_k=\{\nabla F^*_{n(k)}(y_k)\}$ is bounded, thus there must be a convergent subsequence, conviniently also named  $p_k$, such that $p_k \xrightarrow{k\rightarrow\infty} p$. But the set $q_k=\{\nabla F^*_{n(k)}(x_0)\}$ is also bounded thus a subsequence must converge to some $q$ such that $|q-p|\geq \epsilon$. Thus we have two subsequences $(y_k,p_k)$ and $(x_0.q_k)$. By graphical convergence both of them must converge to some point in $\partial F^*(x_0)$, but this set is a singleton and that is a contradiction.
\end{proof}






 


\vspace{1em}
\textsc{Szymon Myga, Department of Mathematics and Computer Science, Jagiellonian University, Poland}

\textit{E-mail address: }\texttt{\email}

\end{document}